\newcommand{\reals}{{\mathbb R}}
\newcommand{\cplx}{{\mathbb C}}
\newcommand{\setR}{{\mathbb R}}
\newcommand{\setC}{{\mathbb C}}
\newcommand{\setZ}{{\mathbb Z}}
\newcommand{\setN}{{\mathbb N}}
\newcommand{\tp}{{\scriptstyle T}}
\newcommand{\atp}{{\scriptstyle T}^{-1}}
\newcommand{\eps}{\varepsilon}
\newcommand{\epss}{\frac{\varepsilon}{2}}
\newcommand{\epsss}{\frac{\varepsilon}{4}}
\newcommand{\sphere}{\mathbb{S}^2}
\newcommand{\cf}{u}
\newcommand{\cfx}{{\hat u}}
\newcommand{\cfy}{{\check u}}
\newcommand{\cfa}{{\tilde v}}
\newcommand{\cfb}{{\tilde w}}
\newcommand{\cvx}{\mathfrak{k}}
\newcommand{\cvy}{\mathfrak{l}}
\newcommand{\fx}{a}
\newcommand{\fy}{b}
\newcommand{\sol}{\theta}
\newcommand{\solbd}{\Theta}
\newcommand{\arsinh}{\operatorname{arsinh}}
\newcommand{\disk}{{\mathbb D}}
\newcommand{\wgt}{\Lambda}
\newcommand{\st}{\smallstar}
\newcommand{\brho}{\bar\rho}
\newcommand{\bo}{\mathcal{O}}
\newcommand{\dd}{\,\mathrm{d}}
\newcommand{\dom}[2]{\Omega(#1|#2)}
\newcommand{\mdom}[2]{\Lambda^{\eps}(#1|#2)}
\newcommand{\ndom}[2]{\Omega^{[#1]\eps}(r|#2)}
\newcommand{\cdom}[2]{\widehat\Omega_{\bar\rho}^{[#1]\eps}(r|#2)}
\newcommand{\xcdom}[1]{\widehat\Omega_{\bar\rho}(r|#1)}
\newcommand{\anl}[2]{C^{\omega}\big(\widehat\Omega^{[#1]\eps}(r|#2)\big)}
\newcommand{\xanl}[1]{C^{\omega}\big(\widehat\Omega(r|#1)\big)}
\newcommand{\snrm}[4]{\left|#4\right|^{[#1]\eps}_{#2,#3}}
\newcommand{\xsnrm}[3]{\left|#3\right|_{#1,#2}}
\newcommand{\xdnrm}[3]{\big\{#3\big\}_{#1,#2}}
\newcommand{\dnrm}[4]{\big\{#4\big\}^{[#1]\eps}_{#2,#3}}
\newcommand{\rnrm}[3]{\big\{\!\{#3\}\!\big\}_{#1,#2}}
\newcommand{\nrm}[3]{\left\|#3\right\|^{[#1]\eps}_{#2}}
\newcommand{\xnrm}[2]{\left\|#2\right\|_{#1}}
\newcommand{\tnrm}[2]{|||#2|||_{#1}}
\newtheorem{lem}{Lemma}
\newtheorem{prp}{Proposition}
\newtheorem{thm}{Theorem}
\newtheorem{cor}{Corollary}
\newtheorem{dfn}{Definition}
\newtheorem{rmk}{Remark}
\newtheorem{prb}{Problem}
\title[Bj\"orling problem for isothermic surfaces]
{Constructing solutions to the Bj\"orling problem for isothermic surfaces by 
structure preserving discretization}
\date{\today}
\author{Ulrike B\"ucking}
\author{Daniel Matthes}
\thanks{This research was supported by the DFG Collaborative Research Center TRR 109, ``Discretization in Geometry and Dynamics''.}
\begin{document}


\begin{abstract}
  In this article, we study an analog of the Bj\"orling problem for isothermic surfaces
  (that are more general than minimal surfaces):
  given a real analytic curve $\gamma$ in $\setR^3$, 
  and two analytic non-vanishing orthogonal vector fields $v$ and $w$ along $\gamma$,
  find an isothermic surface that is tangent to $\gamma$,
  and that has $v$ and $w$ as principal directions of curvature.
  We prove that solutions to that problem can be obtained by
  constructing a family of discrete isothermic surfaces (in the sense of Bobenko 
and Pinkall~\cite{bp})
  from data that is sampled along $\gamma$, and passing to the limit of vanishing mesh size.

  The proof relies on a rephrasing of the Gauss-Codazzi-system as analytic Cauchy problem
  and an in-depth-analysis of its discretization which is induced from the geometry of discrete isothermic surfaces.
  The discrete-to-continuous limit is carried out for the Christoffel and the Darboux transformations as well.
\end{abstract}

\maketitle

\section{Introduction}
Isothermic surfaces are among the most classical objects in differential geometry:
these are surfaces that admit a conformal parametrization along curvature 
lines, 
see Definition \ref{dfn:smoothis} below.
Like various particular geometries --- special coordinate systems, minimal 
surfaces, 
surfaces of constant curvature ---
they have been introduced and intensively studied in the second half of the 
19th~century~\cite{wein,cayley}.
Also, like the many of these classical objects, they have been ``rediscovered'' in the 1990s,
both in connection with integrable systems and in the context of discrete differential geometry.
The first description of isothermic surfaces as soliton surfaces is found in~\cite{cgs}.
The first definition for discrete isothermic surfaces was made shortly after in~\cite{bp}.
In the most simple case, these are immersions of $\setZ^2$ into $\setR^3$
such that the verticies of each elementary quadrilateral are conformally 
equivalent 
to the corners of a planar square, 
see Definition \ref{dfn:discreteis}.

This discrete surface class, its transformations and invariances has been 
studied e.g.\ in \cite{burstall,hj,wolfgang}.
A systematic presentation of the theory of isothermic surfaces in the context 
of 
M\"obius geometry can be found in~\cite{udobuch}. 
Finally, we refer to \cite{buch2} for a detailed overview 
on discrete isothermic surfaces as part of discrete differential geometry, 
including historical remarks.

Despite the manifold results on (classical) isothermic surfaces and the related equations, 
the fundamental question about their construction from suitably chosen data has 
apparently been left open.
On the one hand, the machinery of integrable systems enables one 
to construct a rich variety of ``solitonic'' isothermic surfaces~\cite{cgs}.
But on the other hand, nothing seems to be known about the well-posedness 
of an initial or boundary value problem for the Gauss-Codazzi-equations in general.
The latter form a PDE system (cf.~equations~\eqref{eq:pregc} below) which contains 
both elliptic and hyperbolic equations\footnote{The 
system of Gauss-Codazzi-equations can be simplified to Calapso's equation~\cite{hj}, 
which is a single scalar fourth order PDE, but unfortunately of indefinite type.}.
The appearance of an elliptic equation suggest that data for the surface
boundary should be prescribed, as it is done for minimal surfaces for example.
The hyperbolic equations, on the other hand, suggest to provide data for two curvature lines instead,
like in the case of level surfaces in triply orthogonal systems~\cite{ocs}.
Neither of the two approaches seems promising for the coupled system.
\begin{figure}[htbp]
  \centering
  \setlength{\unitlength}{10pt}
  \begin{minipage}{6em}
  \begin{picture}(9,9)(1,-1)
    \thinlines
    \multiput(0,0)(1,0){8}{\circle*{0.2}}
    \multiput(0,1)(1,0){8}{\circle*{0.2}}
    \multiput(0,2)(1,0){8}{\circle*{0.2}}
    \multiput(0,3)(1,0){8}{\circle*{0.2}}
    \multiput(0,4)(1,0){8}{\circle*{0.2}}
    \multiput(0,5)(1,0){8}{\circle*{0.2}}
    \multiput(0,6)(1,0){8}{\circle*{0.2}}
    \multiput(0,7)(1,0){8}{\circle*{0.2}}
    \multiput(0,7)(1,-1){8}{\circle*{0.4}}
    \multiput(0,6)(1,-1){7}{\circle*{0.4}}
    \multiput(0,0)(0,1){8}{\line(1,0){7}}
    \multiput(0,0)(1,0){8}{\line(0,1){7}}
    \put(-1,3){\vector(1,0){9}}
    \put(3,-1){\vector(0,1){9}}
    \put(7.8,3.2){$x$}
    \put(3.2,7.8){$y$}
    \thicklines
    \multiput(0,6)(1,-1){7}{\line(1,0){1}}
    \multiput(0,7)(1,-1){7}{\line(0,-1){1}}
  \end{picture}
  \end{minipage}
  \hspace{2em}
  \begin{minipage}{8.5em}
  \begin{picture}(8,2)(0,-1)
    \thicklines
    \put(0,0){\line(1,0){9.5}}
    \put(9.5,0){\line(-1,1){1}}
    \put(9.5,0){\line(-1,-1){1}}
    \put(0,0.3){Discrete isothermic}
    \put(1.0,-0.9){parametrization}
  \end{picture}
  \end{minipage}
  \hspace{0.5em}
  \begin{minipage}{120pt}
    \begin{picture}(0,0)(-1.5,0)
      \put(0,-10){\vector(1,0){3}}
      \put(0,-10){\vector(0,1){3}}
      \put(0,-10){\vector(1,1){1.5}}
    \end{picture}
  \epsfig{file=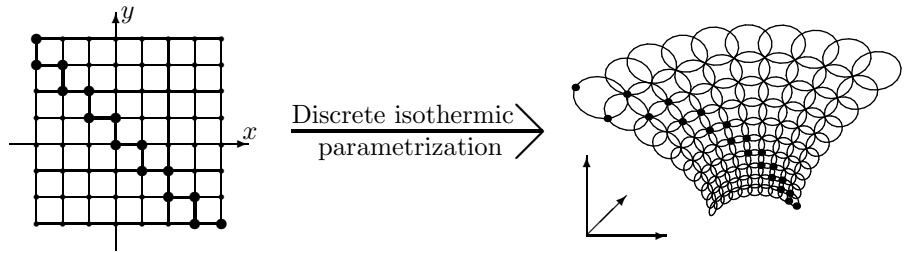,height=120pt}
  \end{minipage}
  \caption{Initial zig-zag on a discrete isothermic surface.}
  \label{fig:zigzag}
\end{figure}

In contrast, there is a canonical way to pose an initial value problem for a 
\emph{discrete} isothermic surface.
One prescribes the vertices in $\setR^3$ for a ``zig-zag''-curve in parameter 
space as indicated in Figure~\ref{fig:zigzag}. 
For vertices in general position, these data can be extended to a discrete 
isothermic surface in a unique way.
In fact, all vertices on the discrete surface are easily obtained inductively 
from the prescribed data.
\smallskip

\textit{In this paper, we formulate and prove solvability of a 
\emph{Bj\"orling problem} for smooth isothermic surfaces.
  And we prove that the solution can be obtained as the continuous limit 
of discrete isothermic surfaces.}
\smallskip

The classical Bj\"orling problem is to find a minimal surface that touches a 
given curve in $\setR^3$ along prescribed tangent planes.
This problem has been solved in general, see \cite{Minsurf}.
An extension of the Bj\"orling problem to surfaces of constant mean curvature 
has been posed (and solved) in \cite{Sepp}. 
A natural formulation of the Bj\"orling problem in the yet more general class 
of isothermic surfaces reads as follows.
\begin{prb}
  Given a smooth parametrized regular curve $\gamma$ in $\setR^3$, 
  and two smoothly varying mutually orthogonal unit vector fields $v,w$ along $\gamma$,
  neither of which is tangent to $\gamma$ at any point.
  Find an isothermic surface $S$ containing $\gamma$ 
  such that $v$ and $w$ are the principal directions of curvature at each point of $\gamma$.
\end{prb}
Above, ``smooth'' has to be understood as ``(component-wise) real analytic''.
As a corollary of the results presented here,
it follows that this problem is uniquely solvable, at least locally around each point of $\gamma$.
The non-tangency of the vector fields is needed to obtain a non-characteristic Cauchy problem.
For ease of presentation, we do not address the Bj\"orling problem is this full 
generality here, but stick to a slightly more restricted setting,
see Theorem \ref{thm:existence}.

Existence and uniqueness of a smooth isothermic surface $S$ for given data is 
the minor result of this paper.
The main result is that the smooth data can be ``sampled'' with a mesh width 
$\eps>0$ in a suitable way such that 
the discrete isothermic surfaces $S^\eps$ constructed from the discrete data 
converge in $C^1$ to $S$.
The precise formulation is given in Theorem \ref{thm:dsc}.

It is remarkable that naive numerical experiments suggest that such an 
approximation 
result might \emph{not} be true.
It was already noted in~\cite{bp} that discrete isothermic surfaces depend very 
sensitively on their initial data.
The limit $\eps\to0$ is delicate, 
and inappropriate choices of the initial zig-zag cause the sequence $S^\eps$ to diverge rapidly. 
In fact, even the possibility to construct \emph{any} sequence of discrete isothermic surfaces
that approximates a given smooth one is not obvious.
Discrete isothermic surfaces are one of many examples of a discretized geometric structure
for which the passage back to the original continuous structure needs a highly 
non-trivial approximation result,
the proof of which is analysis-based and goes far beyond elementary geometric considerations.
Further such non-trivial convergence results are available, for instance, 
for discrete surfaces of constant negative Gaussian curvature \cite{BMS},
for discrete triply orthogonal systems \cite{ocs},
and, most importantly, for circle patterns \cite{paperconv,Schramm,St05} as approximations to conformal maps.

The core of our convergence proof is a stability analysis of the \emph{discrete Gauss-Codazzi system}
that we derive for discrete isothermic surfaces.
We show that the solution to the discrete Gauss-Codazzi equations with sampled data as initial condition remains close 
to the solution of the classical Gauss-Codazzi system for the same (continuous) initial data.
In a second step, this implies proximity of the respective discrete and continuous surfaces.
We are able to quantify the approximation error 
in terms of the supremum-distance between analytic functions on complex domains:
it is linear in the mesh size.
In fact, we conjecture that this result is sub-optimal, 
and second-order approximation should be provable,
using a more refined analysis and a more careful approximation of the data.

The techniques used in the proof are similar to those 
employed by one of the authors \cite{ich} to prove convergence of circle patterns to conformal maps.
The geometric situation for isothermic surfaces, however, is much more complicated, 
and the structure of the Gauss-Codazzi system is much more complex than the Cauchy-Riemann equations.
The proof of stability relies on estimates for the solution of analytic Cauchy 
problems in scales of Banach spaces. 
These estimates have been developed --- in the classical, non-discretized 
setting --- in Nagumo's famous article \cite{nag} 
as part of the existence proof for analytic Cauchy problems. 
Here, we shall rather use Nirenberg's \cite{nir} version of these estimates. 
For an overview over the history of analytic Cauchy problems and the related 
estimates, see the beautiful article of Walter \cite{wal}.

Note that the convergence proof here is more direct than the one in \cite{ich}. 
While the latter was based on purely discrete considerations, the current proof 
uses semi-discrete techniques:
a -- somewhat artificial -- extension of the discrete functions to continuous domains
allows to formulate estimates more easily.
The main simplification, however, is that we separate the proofs 
for existence of a classical solution and its approximation by discrete solutions.
\bigskip

The paper is organized as follows.
In Section~\ref{sec:smooth} we formulate the Gauss-Codazzi-system for smooth 
isothermic surfaces in the framework of analytic Cauchy problems
and prove unique local solvability of the Bj\"orling problem by the Cauchy-Kowalevskaya theorem.
In Section~\ref{sec:discrete} we derive an analogous system of difference 
equations for discrete isothermic surfaces. For appropriate initial 
conditions, the convergence of the discrete solutions to the corresponding 
smooth ones is proven in Section~\ref{sec:conv}.
Then, in Section~\ref{sec:convsurf} we explain how to discretize the Bj\"orling 
initial data appropriately,
and prove convergence of the discrete surfaces to the respective smooth one.
Finally, in Section~\ref{sec:trafos}, the convergence result is extended to 
Christoffel and Darboux transformations.

\section{Smooth Isothermic Surfaces}\label{sec:smooth}
We start by summarizing basic properties of smooth isothermic surfaces 
and proving our first result on the local solvability of the Bj\"orling problem.

\subsection{Coordinates and Domains}
For concise statements and proofs, 
we need to work with two different coordinate systems $(\xi,\eta)$ and $(x,y)$ on $\setR^2$ simultaneously.
These coordinates are related to each other by
\begin{equation}
  \label{eq:xyxieta}
  \xi=\frac{x-y}2,\quad \eta=\frac{x+y}2
  \quad\Leftrightarrow\quad
  x=\eta+\xi,\quad y=\eta-\xi,
\end{equation}
see Figure \ref{fig:xyxieta}.
Accordingly, the partial derivatives transform as follows:
\begin{align*}
  \partial_\xi = \partial_x-\partial_y,\quad\partial_\eta=\partial_x+\partial_y.
\end{align*}
Observe in particular that 
\begin{align}
  \label{eq:laplace}
  \partial_\xi^2+\partial_\eta^2=2(\partial_x^2+\partial_y^2).  
\end{align}
It will be convenient to consider $(\xi,\eta)$ as the ``basic'' coordinates and $(x,y)$ as the auxiliary ones.
E.g., in the rare cases that we need to specify the arguments 
of a function $F:\Omega\to\setR$ defined on a domain $\Omega\subset\setR^2$ explicitly,
we will write $F(\xi,\eta)$ if $F$ is evaluated at the point with coordinates $x=\eta+\xi$ and $y=\eta-\xi$.

For further reference, define for $r\ge h>0$ the domains
\begin{align*}
  \dom rh = \big\{ (\xi,\eta)\in\setR^2\,;\,|\xi|+|\eta|\le r,\,-h<\eta\le h\big\}.
\end{align*}
In the $(x,y)$-coordinates, $\dom rh$ is a axes-parallel square of side length $2r$, centered at the origin,
that is cut off at the top-right and bottom-left corners.
\begin{figure}[tbp]
  \centering
  \setlength{\unitlength}{10pt}
\begin{picture}(10,10)(0,0)
    \put(0,5){\vector(1,0){10}}
    \put(5,0){\vector(0,1){10}}
    \put(2,2){\vector(1,1){6}}
    \put(8,2){\vector(-1,1){6}}
    \put(1,5){\line(1,1){4}}
    \put(5,9){\line(1,-1){4}}
    \put(9,5){\line(-1,-1){4}}
    \put(5,1){\line(-1,1){4}}
    \put(9.5,5.4){$\xi$}
    \put(5.3,9.5){$\eta$}
    \put(8.7,5.4){$1$}
    \put(4.2,8.8){$1$}
    \put(8.5,8.2){$x$}
    \put(1,8.2){$y$}
    \put(2.8,7.5){$1$}
    \put(6.8,7.5){$1$}
  \end{picture}
  \caption{Relation between the coordinates $(x,y)$ and $(\xi,\eta)$.}
  \label{fig:xyxieta}
\end{figure}
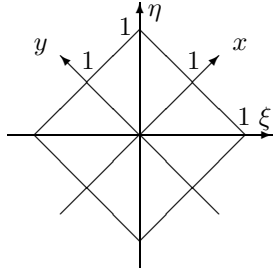

\subsection{Definition and equations}
By abuse of notation, we use the term ``(para\-metrized) surface'' 
for a smooth and non-degenerate map $F:\dom rh\to\setR^3$ defined on a cube.
Here non-degeneracy means that the vector fields $F_x$ and $F_y$ do not vanish anywhere.
Every such surface comes with a smooth normal map $N:\dom rh \to\sphere$, given 
by 
\[N=\frac{F_x\times F_y}{\|F_x\times F_y\|}.\]
\begin{dfn}
  \label{dfn:smoothis}
  $F:\dom rh\to\reals^3$ is a (parametrized) {\em isothermic surface},
  if 
  \begin{enumerate}
  \item $F$ is conformal, i.e.,
    there exists a conformal factor $\cf:\dom rh\to\setR$ such that
    \begin{align}
      \label{eq:conform}
      \|F_x\|^2 = \|F_y\|^2 = e^{2\cf},\quad \langle F_x,F_y \rangle = 0,
    \end{align}
 \item $F$ parametrizes along curvature lines, i.e.,
   the normal map $N:\dom rh\to\sphere$ satisfies
    \begin{align}
      \label{eq:curvline}
      \langle F_{xy},N\rangle = 0.
    \end{align}
  \end{enumerate}
  The quantities $\cvx,\cvy:\dom rh\to\setR$ in
  \begin{align*}
    -\langle N_x,F_x \rangle = e^{\cf}\cvx,\quad 
    -\langle N_y,F_y \rangle = e^{\cf}\cvy,
  \end{align*}
  are the (scaled) principal curvatures.
\end{dfn}
\begin{rmk}
  The genuine principal curvature functions are given by $e^{-\cf}\cvx$ and $e^{-\cf}\cvy$.
  The quantities $\cvx$ and $\cvy$ are better suited for the calculations below.
\end{rmk}
The next result is a classical.
\begin{lem}
  \label{lem:geometry}
  Assume that an isothermic surface $F:\dom rh\to\setR^3$ is given.
  Then the conformal factor $\cf$ and the scaled curvatures $\cvx,\cvy$ satisfy 
the Gauss-Codazzi equations
  \begin{equation}
    \label{eq:pregc}
    - (\cf_{xx} + \cf_{yy}) = \cvx\cvy, \quad
    \cvy_x = \cvx\cf_x, \quad
    \cvx_y = \cvy \cf_y.
  \end{equation}
  Conversely, if functions $\cf,\cvx,\cvy:\dom rh\to\setR$ satisfy the 
system~\eqref{eq:pregc},
  then there exists an isothermic surface $F:\dom rh\to\setR$ that has
  $\cf$ as its conformal factor and has scaled curvatures $\cvx,\cvy$.
  Moreover, $F$ is uniquely determined up to Euclidean motions.
\end{lem}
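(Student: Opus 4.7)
This is a classical result in surface theory; the plan is to handle the two directions separately.

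\textbf{Forward direction.} I would begin from the two defining properties and compute the fundamental forms of $F$. Conformality gives the first fundamental form $I=e^{2\cf}(dx^2+dy^2)$, while the curvature line condition $\langle F_{xy},N\rangle=0$ implies the second fundamental form is diagonal; using $\langle N,F_x\rangle=0$ to rewrite $-\langle N_x,F_x\rangle=\langle N,F_{xx}\rangle$ and analogously in $y$, the defining relations for $\cvx,\cvy$ identify the diagonal entries as $\langle F_{xx},N\rangle=e^{\cf}\cvx$ and $\langle F_{yy},N\rangle=e^{\cf}\cvy$. The first equation in \eqref{eq:pregc} then follows from equating two standard expressions for the Gaussian curvature --- the intrinsic $K=-e^{-2\cf}(\cf_{xx}+\cf_{yy})$ valid for a conformal metric, and the extrinsic $K=e^{-2\cf}\cvx\cvy$ obtained from the two fundamental forms. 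The remaining two equations are the Mainardi-Codazzi equations which, in this conformal curvature-line setting, read $\partial_y(e^{\cf}\cvx)=(e^{\cf}\cvx+e^{\cf}\cvy)\cf_y$ and $\partial_x(e^{\cf}\cvy)=(e^{\cf}\cvx+e^{\cf}\cvy)\cf_x$; after dividing by $e^{\cf}$ and cancelling the $\cf_y\cvx$ and $\cf_x\cvy$ terms, these reduce to exactly $\cvx_y=\cvy\cf_y$ and $\cvy_x=\cvx\cf_x$.

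\textbf{Converse direction.} Given functions $\cf,\cvx,\cvy$ satisfying \eqref{eq:pregc} on the (convex, hence simply connected) domain $\dom rh$, I would define the prospective fundamental forms $I=e^{2\cf}(dx^2+dy^2)$ and $II=e^{\cf}\cvx\,dx^2+e^{\cf}\cvy\,dy^2$ and invoke the fundamental theorem of surface theory (Bonnet): since $I$ is positive definite and the pair $(I,II)$ satisfies the Gauss and Mainardi-Codazzi integrability conditions, there exists an immersion $F:\dom rh\to\setR^3$ realizing them, and it is unique up to a rigid motion of $\setR^3$. The only point to check is that the full Gauss-Codazzi system for $(I,II)$ reduces, under this three-scalar parametrization, to the three equations \eqref{eq:pregc} --- but this is precisely the forward computation read backwards.

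\textbf{Anticipated obstacle.} There is no genuine mathematical difficulty; the entire argument is classical. The only nuisance is notational bookkeeping --- keeping track of signs, of the distinction between the scaled curvatures $\cvx,\cvy$ and the genuine principal curvatures $e^{-\cf}\cvx,\,e^{-\cf}\cvy$ (cf.\ the remark following the definition), and of the conformal Christoffel symbols $\Gamma^1_{11}=\cf_x$, $\Gamma^1_{12}=\cf_y$, and so on. An equivalent, slightly more explicit route that bypasses Bonnet would be to set up the Gauss-Weingarten equations for the adapted orthonormal frame $(e^{-\cf}F_x,\,e^{-\cf}F_y,\,N)$ as a first-order linear PDE system with $SO(3)$ target, verify by direct calculation that its Frobenius integrability condition is equivalent to \eqref{eq:pregc}, integrate the frame on $\dom rh$, and finally recover $F$ by integrating $dF=e^{\cf}(e_1\,dx+e_2\,dy)$; the two constants of integration (an $SO(3)$-rotation of the initial frame and a translation of the base point) account precisely for the ambiguity by a Euclidean motion.
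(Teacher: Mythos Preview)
Your proposal is correct. The paper, however, takes precisely what you call the ``equivalent, slightly more explicit route'': it introduces the adapted orthonormal frame $\Psi=(e^{-\cf}F_x,e^{-\cf}F_y,N)$, writes $\Psi_x=\Psi U$ and $\Psi_y=\Psi V$ with explicit $\mathrm{so}(3)$-valued matrices $U,V$ depending on $\cf_x,\cf_y,\cvx,\cvy$, and reads off \eqref{eq:pregc} as the compatibility condition $U_y-V_x=UV-VU$; the converse integrates $\Psi$ from $U,V$ and then $F$ from $e^{\cf}\Psi_1\,dx+e^{\cf}\Psi_2\,dy$ (whose closedness is checked directly). Your primary approach via the fundamental forms and Bonnet's theorem is cleaner if one is willing to quote Bonnet as a black box, and it makes the identification of the three scalar equations with Gauss and Mainardi--Codazzi transparent. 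The paper's frame approach is more self-contained and, more importantly, produces the explicit transition matrices $U,V$ (equations \eqref{eq:matUV}) that are reused verbatim in the proof of Theorem~\ref{thm:existence} and serve as the continuous template for the discrete frame relations \eqref{eq:reconf}--\eqref{eq:reconb}; so in the context of this paper the frame computation is not merely an alternative but the more economical choice.
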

We briefly recall the proof, since we shall need some of the calculations later.
\begin{proof}[Sketch of proof]
  For a given isothermic surface $F:\dom rh\to\setR^3$, introduce the adapted frame 
  \begin{align*}
    \Psi:=\big(e^{-\cf}F_x,e^{-\cf}F_y,N\big):\dom rh\to\mathrm{SO(3)}
  \end{align*}
  and define the transition matrices $U,V:\dom rh\to\mathrm{so(3)}$ implicitly by
  \begin{align}
    \label{eq:compat}
    \Psi_x=\Psi U,\quad \Psi_y=\Psi V.
  \end{align}
  Using the defining properties of the isothermic parametrization, 
  one easily obtains the following explicit expressions for $U$ and $V$:
  \begin{align}
    \label{eq:matUV}
    U=
    \begin{pmatrix}
      0 & \cf_y & -\cvx \\
      -\cf_y & 0 & 0 \\
      \cvx & 0 & 0
    \end{pmatrix},
    \quad
    V=
    \begin{pmatrix}
      0 & -\cf_x & 0 \\
      \cf_x & 0 & -\cvy \\
      0 & \cvy & 0      
    \end{pmatrix}
  \end{align}
  The compatibility condition $U_y-V_x=UV-VU$ implies the equations 
in~\eqref{eq:pregc}.

  Conversely, if $\cf,\cvx,\cvy$ satisfy~\eqref{eq:pregc}, 
  then the matrix functions $U,V:\dom rh \to\mathrm{so(3)}$ defined 
by~\eqref{eq:matUV} 
  satisfy compatibility condition $U_y-V_x=UV-VU$.
  Consequently, one can define a further matrix function 
$\Psi=(\Psi_1,\Psi_2,\Psi_3):\dom rh \to\mathrm{SO(3)}$ 
  as solution to the system~\eqref{eq:compat}.
  Clearly, the solution $\Psi$ is uniquely determined by its value 
$\Psi(0)\in\mathrm{SO(3)}$ at $(x,y)=0$.
  The particular form of $U$ and $V$ imply that
  \begin{align*}
    \partial_x(e^\cf\Psi_2)=\partial_y(e^\cf\Psi_1),
  \end{align*}
  which further implies the existence of a map $F:\dom rh \to\setR^3$ 
  such that 
  \begin{align}
    \label{eq:Fcompat}
    \partial_xF=e^\cf\Psi_1\quad\text{and}\quad\partial_yF=e^\cf\Psi_2.    
  \end{align}
  The map $F$ is non-degenerate, and it is uniquely determined by its value 
$F(0)\in\setR^3$ at $(x,y)=0$.
  Clearly $\Psi$ is an adapted frame for the surface defined by $F$, whose 
normal vector field is given by $\Psi_3$.
  It follows directly from~\eqref{eq:Fcompat} that $F$ is 
conformal~\eqref{eq:conform}.
  The property~\eqref{eq:curvline} is a further direct consequence 
of~\eqref{eq:compat} 
  and the special form of $U$ and $V$ from~\eqref{eq:matUV}.
\end{proof}
A different form of the Gauss-Codazzi equations~\eqref{eq:pregc} is needed in 
the following.
Introduce auxiliary functions $v,w:\dom rh\to\setR$ by
\begin{align*}
  v=\frac12\cf_\xi,\quad w=\frac12\cf_\eta.
\end{align*}
Further recall~\eqref{eq:laplace}.
Then the Gauss-Codazzi system~\eqref{eq:pregc} attains the form
\begin{align}
  \label{eq:gc1}  v_\eta &= w_\xi, \\
  \label{eq:gc1a} w_\eta &= -v_\xi - \cvx\cvy, \\
  \label{eq:gc2}  \cvx_y &= \cvy (w-v), \\
  \label{eq:gc3}  \cvy_x &= \cvx(w+v).
\end{align}

\subsection{Local solution of the Bj\"orling problem}
%
%
\begin{thm}
  \label{thm:existence}
  Let an analytic and regular curve $f:(-r,r)\to\setR^3$ and an analytic normal vector field $n:(-r,r)\to\sphere$ be given,
  that is $\langle f',n\rangle \equiv0$.
  Then, for some $h>0$ with $h\le r$, there exists a unique analytic isothermic surface $F:\dom rh\to\setR^3$
  such that $F$ and its normal vector field $N$ satisfy
  \begin{align}
    \label{eq:Fic}
    F(\xi,0)=f(\xi),\quad N(\xi,0)=n(\xi)
    \quad\text{for all $\xi\in(-r,r)$}.
  \end{align}
\end{thm}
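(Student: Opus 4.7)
The strategy is to reformulate the Bj\"orling problem as an analytic Cauchy problem for the Gauss-Codazzi system \eqref{eq:gc1}--\eqref{eq:gc3}, apply the Cauchy-Kowalevskaya theorem, and then reconstruct the surface using Lemma~\ref{lem:geometry}. The first step is to put the system in normal form with respect to $\eta$. Equations~\eqref{eq:gc1} and \eqref{eq:gc1a} already give $v_\eta$ and $w_\eta$ in the required form. For the curvature equations~\eqref{eq:gc2}, \eqref{eq:gc3}, I substitute $\partial_x = (\partial_\xi + \partial_\eta)/2$ and $\partial_y = (\partial_\eta - \partial_\xi)/2$ to obtain
\begin{align*}
  \cvx_\eta &= \cvx_\xi + 2\cvy(w - v), \\
  \cvy_\eta &= -\cvy_\xi + 2\cvx(w + v).
\end{align*}
The resulting first-order quasilinear system for $(v, w, \cvx, \cvy)$ has right-hand sides polynomial in the unknowns and affine in their $\xi$-derivatives, so the Cauchy-Kowalevskaya theorem is applicable in the analytic class.

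Next I extract analytic initial data at $\eta = 0$ from $f$ and $n$. Along $\gamma(\xi) := F(\xi, 0)$, the identity $F_\xi = F_x - F_y$ together with $F_x \perp F_y$ and $\|F_x\| = \|F_y\| = e^\cf$ forces $e^{\cf(\xi, 0)} = \|f'(\xi)\|/\sqrt 2$, while $F_\eta = F_x + F_y$ must be orthogonal to both $f'$ and $n$ with the same length as $f'$, so $F_\eta(\xi, 0) = n(\xi) \times f'(\xi)$, the sign being fixed so that the adapted frame $\Psi = (e^{-\cf}F_x, e^{-\cf}F_y, N)$ lies in $\mathrm{SO(3)}$. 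This determines $\Psi(\xi, 0)$ explicitly and analytically. Reading off the entries of $\Psi^{-1}\Psi_\xi|_{\eta = 0}$ and matching with the explicit form of $U - V$ computed from \eqref{eq:matUV} then yields $w(\xi, 0)$, $\cvx(\xi, 0)$, $\cvy(\xi, 0)$ analytically in $f$, $n$ and their derivatives; finally $v(\xi, 0) = \cf_\xi(\xi, 0)/2$ comes from the explicit formula for $\cf(\xi, 0)$.

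With analytic initial data in hand, Cauchy-Kowalevskaya produces a unique analytic solution $(v, w, \cvx, \cvy)$ on $\dom rh$ for some $h > 0$. Since \eqref{eq:gc1} is precisely the closure of $v\dd\xi + w\dd\eta$, I recover $\cf$ by line integration, using the boundary value $\cf(\xi, 0) = \log(\|f'(\xi)\|/\sqrt 2)$ to fix the additive constant. The triple $(\cf, \cvx, \cvy)$ now solves \eqref{eq:pregc}, and Lemma~\ref{lem:geometry} produces an isothermic surface $F$; fixing $\Psi(0, 0)$ and $F(0, 0)$ to the values dictated by the initial data, uniqueness for the linear ODE $\Psi_\xi = \Psi(U - V)$ along $\eta = 0$ shows that $\Psi(\xi, 0)$ coincides with the frame built from $(f, n)$, whence $F(\xi, 0) = f(\xi)$ and $N(\xi, 0) = n(\xi)$. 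Uniqueness of $F$ then follows from the uniqueness clause in Cauchy-Kowalevskaya together with the up-to-Euclidean-motion uniqueness in Lemma~\ref{lem:geometry}. The main subtlety is the initial-data extraction --- in particular the orientation choice needed to recover both principal directions from only $f$ and $n$ --- and the verification that the reconstruction from Lemma~\ref{lem:geometry} reproduces precisely the prescribed $f$ and $n$; once these bookkeeping tasks are carried out, the rest is the standard analytic Cauchy-Kowalevskaya machinery.
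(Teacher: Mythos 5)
Your proposal is correct and follows essentially the same route as the paper's own proof: extract $\cf$, the adapted frame, and hence $(v^0,w^0,\cvx^0,\cvy^0)$ from $(f,n)$ along $\eta=0$, apply Cauchy-Kowalevskaya to the system \eqref{eq:gc1}--\eqref{eq:gc3} (non-characteristic in $\eta$), recover $\cf$ by integration, reconstruct $F$ via Lemma~\ref{lem:geometry}, and verify the initial data through the frame ODE $\Psi_\xi=\Psi(U-V)$ on $\eta=0$. Your explicit normal form for the curvature equations and the formula $F_\eta(\xi,0)=n\times f'$ simply make explicit what the paper states more briefly.
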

\begin{rmk}
  The original Bj\"orling problem consists in finding a minimal surface in $\setR^3$ 
  that touches a given curve along prescribed tangent planes.
  See~\cite{Sepp} for an extension to constant mean curvature surfaces.
  Our problem is a bit different since \eqref{eq:Fic} implies in addition that 
  the tangential vector to the data curve is everywhere at angle $\pi/4$ with 
the directions of principal curvature,
  see~\eqref{eq:help004} below.
  Such additional restrictions are expected to guarantee unique solvability of the Bj\"orling problem 
  in the much larger class of isothermic surfaces.
\end{rmk}
\begin{proof}[Proof of Theorem~\ref{thm:existence}]
  If there exists an isothermic surface $F:\dom rh\to\setR^3$ with the 
properties~\eqref{eq:Fic},
  then
  \begin{align}
    \label{eq:help004}
    f'(\xi) = F_\xi(\xi,0) = F_x(\xi,0)-F_y(\xi,0)
  \end{align}
  at every $\xi\in(-r,r)$,  
  and in particular
  \begin{align*}
    \|f'(\xi)\|^2 = \|F_x(\xi,0)\|^2+\|F_y(\xi,0)\|^2-\langle F_x(\xi,0),F_y(\xi,0)\rangle = 2e^{2\cf(\xi,0)}.
  \end{align*}
  It follows that $f$ and $n$ determine both the conformal factor $\cf$ 
  and the adapted frame 
$\Psi=(e^{-\cf}F_x,e^{-\cf}F_y,N):\dom rh\to\mathrm{SO(3)}$ uniquely on 
$\eta=0$;
  denote the corresponding functions by $\cf^0:(-r,r)\to\setR$ and 
$\Psi^0:(-r,r)\to\mathrm{SO(3)}$, respectively.

  Next, introduce functions $v^0,w^0,\cvx^0,\cvy^0:(-r,r)\to\setR$ by
  \begin{align}
    \label{eq:help001}
    (\cf^0)' = 2v^0\cf^0,
    \qquad
    (\Psi^0)' = \Psi^0
    \begin{pmatrix}
      0 & 2w^0 & - \cvx^0 \\ -2w^0 & 0 & \cvy^0 \\ \cvx^0 & -\cvy^0 & 0
    \end{pmatrix}.
  \end{align}
  The line $\{\eta=0\}=\{x+y=0\}$ is obviously non-characteristic for the system 
of equations~\eqref{eq:gc1}--\eqref{eq:gc3}.  
  Hence, the Cauchy-Kowalevskaya theorem applies in this situation.
  For some sufficiently small $h>0$, 
  there exists a unique analytic solution $v,w,\cvx,\cvy:\dom rh\to\setR$ 
to~\eqref{eq:gc1}--\eqref{eq:gc3} 
  with the initial conditions $v^0,w^0,\cvx^0,\cvy^0$ at $\eta=0$.
  Since \eqref{eq:gc1} is a compatibility condition for the linear system
  \begin{align*}
    \cf_\xi = 2v,\quad \cf_\eta = 2w,
  \end{align*}
  there exists a unique analytic solution $\cf:\dom rh\to\setR$ with $\cf=\cf^0$ for $\eta=0$.
  The triple $(\cf,\cvx,\cvy)$ satisfies~\eqref{eq:pregc}.
  Lemma~\ref{lem:geometry} guarantees the existence of a unique isothermic 
surface $F:\dom rh\to\setR^3$
  with $\cf$ as conformal factor, with scaled principle curvatures $\cvx$ and $\cvy$,
  and with the normalizations 
  \begin{align}
    \label{eq:help002}
    F(0)=f(0),\ N(0)=n(0),\, F_x(0)-F_y(0)=f'(0).
  \end{align}
  Analyticity of $F$ is clear from its construction in the proof.
  To see that $F$ attains the initial data~\eqref{eq:Fic},
  first observe that an adapted frame $\Psi$ necessarily satisfies $\Psi_\xi=\Psi_x-\Psi_y=\Psi(U-V)$, 
  and so $\Psi=\Psi^0$ on $\eta=0$, thanks to~\eqref{eq:matUV} 
and~\eqref{eq:help001}\&\eqref{eq:help002}.
  In particular, we have that $\Psi_3(\xi,0)=N(\xi)$.
  And further, $F_\xi= \Psi_1-\Psi_2 =\Psi_1^0-\Psi_2^0$ implies $F=f$ on $\eta=0$.

  Concerning uniqueness: 
  $f$ and $\Psi^0$ determine the initial data $(v^0,w^0,\cvx^0,\cvy^0)$ 
for~\eqref{eq:gc1}--\eqref{eq:gc3} 
  --- and hence also its solution $(v,w,\cvx,\cvy)$ --- uniquely.
  Invoking again Lemma~\ref{lem:geometry}, it follows that $F$ with the 
normalization~\eqref{eq:help002} is unique as well.
\end{proof}

\section{Discrete Isothermic Surfaces}\label{sec:discrete}
Throughout this section, we assume that some (small) parameter $\eps>0$ is given,
which quantifies the average mesh width of the considered discrete isothermic surfaces.
We introduce the abbreviation
\begin{align}
  \label{eq:star}
  z^*=\sqrt{1-\eps^2z^2}
\end{align}
for arbitrary quantities $z$, assuming that $|\eps z|<1$.

\subsection{Coordinates and Domains}
Recall that we are working with the two coordinate systems 
from~\eqref{eq:xyxieta} simultaneously,
$(\xi,\eta)$ being the ``basic'' coordinates and $(x,y)$ being the ``auxiliary'' ones.
Introduce the associated shift-operators $\tp_x,\tp_y,\tp_\xi,\tp_\eta$ by
\begin{align*}
  &\tp_x(\xi,\eta) = (\xi+\epsss,\eta+\epsss),\ &\tp_\xi(\xi,\eta) = (\xi+\epss,\eta)  \\
  &\tp_y(\xi,\eta) = (\xi-\epsss,\eta+\epsss),\ &\tp_\eta(\xi,\eta) = (\xi,\eta+\epss) .
\end{align*}
By slight abuse of notation, we shall use the same symbols
for the associated contra-variant shifts of functions $f:\dom rh\to\setR$, i.e., $\tp_xf:=f\circ\tp_x$ etc.
The associated central difference quotient operators are defined by
\begin{align*}
  \delta_xf=\frac1\eps(\tp_xf-\atp_xf),\,
  \delta_yf=\frac1\eps(\tp_yf-\atp_yf),\,
  \delta_\xi f=\frac1\eps(\tp_\xi f-\atp_\xi f),\,
  \delta_\eta f=\frac1\eps(\tp_\eta f-\atp_\eta f).
\end{align*}
It is a notorious inconvenience in discrete differential geometry that 
the various quantities which are derived from discrete geometric objects
are associated to different natural domains of definition.
To account for that,
we need to single out specific subdomains inside our basic domain $\dom{r}{h}$:
let
\begin{align*}
  \ndom{x}{h} = \dom{r}{h}\cap\tp_x\dom{r}{h}\cap\atp_x\dom{r}{h}, 
\end{align*}
be the natural domain of definition for $\delta_xf$, when $f$ is defined on $\dom{r}h$.
Likewise, we define $\ndom{y}h$.
The domain 
\begin{align*}
  \ndom{xy}h=\dom{r-\epss}{h-\epss}  
\end{align*}
is such that the mixed difference quotient $\delta_x\delta_y f$ is well-defined there;
notice that $\delta_\xi f$ and $\delta_\eta f$ are well-defined on $\ndom{xy}h$.
In the same spirit, we introduce $\ndom{xxy}h$ as domain for $\delta_x^2\delta_yf$ etc.

For each point $\zeta\in\ndom{xy}{h}$, we say that 
the four points $\tp_\xi\zeta$, $\tp_\eta\zeta$, $\atp_\xi\zeta$ and $\atp_\eta\zeta$ form an \emph{elementary $\eps$-square}.
\begin{figure}[htbp]
  \centering
  \setlength{\unitlength}{3pt}
  \begin{picture}(20,20)(-10,-10)
    \multiput(0,-10)(-10,10){2}{\line(1,1){10}}
    \multiput(0,-10)(10,10){2}{\line(-1,1){10}}
    \put(10,0){\circle*{1}}
    \put(-10,0){\circle*{1}}
    \put(0,10){\circle*{1}}
    \put(0,-10){\circle*{1}}
    \put(2,10){$p_3$} \put(12,0){$p_2$} \put (-15,0){$p_4$} \put(2,-10){$p_1$}
  \end{picture}
  \hspace{3cm}
  \begin{picture}(20,20)(-10,-10)
    \multiput(0,-10)(-10,10){2}{\line(1,1){10}}
    \multiput(0,-10)(10,10){2}{\line(-1,1){10}}
    \put(0,0){\circle{1}} \put(-2,2){$v$,$w$}
    \put(-2,-4){$\cfa$,$\cfb$} \put(1,-1){$N$}
    \put(-5,5){\framebox(1,1)[-5,5]} \put(-14,6){$\fx$,$\cfx$,$\cvy$}
    \put(4.4,4.4){{\tiny $\blacksquare$}} \put(6,6){$\fy$,$\cfy$,$\cvx$}
    \put(5,-5){\framebox(1,1)[5,-5]} \put(7,-7){$\fx$,$\cfx$,$\cvy$}
    \put(-5.5,-5.5){{\tiny $\blacksquare$}} \put(-14,-7){$\fy$,$\cfy$,$\cvx$}
    \put(10,0){\circle*{1}}
    \put(-10,0){\circle*{1}}
    \put(0,10){\circle*{1}}
    \put(0,-10){\circle*{1}}
    \put(1,10){$F$} \put(11.5,-1){$F$} \put (-14,-1){$F$} \put(2,-11){$F$}
  \end{picture}
  \caption{Conformal squares and the association of quantities to lattice points.}
  \label{fig:square}
\end{figure}
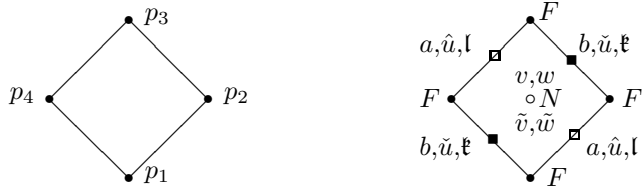

\subsection{Definition of discrete isothermic surfaces}
\label{subseq:conf}
In this section, we give a variant of the definition for discrete isothermic surfaces from \cite{bp},
which is well-suited for the passage to the continuum limit.
First, we need an auxiliary notation.
\begin{dfn}
  Four points $p_1,\ldots,p_4\in\reals^3$ form a {\em (non-degenerate) conformal square} iff 
  they lie on a circle, but no three of them are on a line, 
  they are cyclically ordered\footnote{Cyclic ordering means that walking around the circle 
    either clockwise or anti-clockwise, one passes $p_1$, $p_2$, $p_3$ and $p_4$ in that order.}, 
  and their mutual distances are related by
  \begin{align}
    \label{eq:cr}
    \|p_1-p_2\|\cdot\|p_3-p_4\| = \|p_1-p_4\|\cdot\|p_2-p_3\|.
  \end{align}
\end{dfn}
\begin{rmk}
  \begin{itemize}
  \item The name refers to the fact that $p_1,\ldots,p_4$ form a (non-degenerate) conformal square, 
    if there is a M\"obius transformation of $\setR^3$ 
    which takes these points to the corners of the unit square, $(0,0,0)$, $(1,0,0)$, $(1,1,0)$ and $(0,1,0)$, respectively.
    Notice that this characterization is not completely equivalent, 
    since also certain point configurations on a straight line can be M\"obius transformed int the unit square.
  \item Alternatively, one could define conformal squares by saying that 
    $p_1$ to $p_4$ have cross-ratio equal to minus one,
    either in the sense of quaternions, see for example~\cite {udobuch},
    or after identification of these points with complex numbers in their common plane.
    Again, this characterization is a little more general.
  \end{itemize}
\end{rmk}
The following is an easy exercise in elementary geometry.
\begin{lem}
  \label{lem:construct}
  For any given three points $p_1,p_2,p_3\in\setR^3$ (with ordering) that are 
not collinear (and in particular pairwise distinct),
  there exists precisely one fourth point $p_4\in\setR^3$ that completes the conformal square.
  Moreover, the coordinates of $p_4$ depend analytically on those of $p_1$, $p_2$ and $p_3$.
\end{lem}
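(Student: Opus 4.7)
My plan is to reduce the problem to a planar one, since any four concyclic points lie in a common plane, and then exploit the classical cross-ratio characterization of conformal squares.

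First I would observe that since $p_1,p_2,p_3$ are not collinear, they span a unique affine plane $\Pi\subset\setR^3$ and lie on a unique circle $C\subset\Pi$. Any completion to a conformal square forces $p_4$ to be concyclic with the others, hence $p_4\in C$ necessarily. Next I would fix an analytic orientation-preserving isometry $\Pi\to\setC$; concretely, apply Gram--Schmidt to $p_2-p_1$ and $p_3-p_1$ to build an orthonormal basis $(e_1,e_2)$ of $\Pi-p_1$, which is analytic in $(p_1,p_2,p_3)$ on the open set where the three points are not collinear. This yields complex coordinates $z_j$ for $p_j$, with $z_1=0$, depending analytically on the data.

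The key step is to translate \eqref{eq:cr} together with the ordering assumption into the statement that the complex cross-ratio
\[
q=\frac{(z_1-z_2)(z_3-z_4)}{(z_1-z_4)(z_3-z_2)}
\]
equals $-1$. Indeed, since $z_1,z_2,z_3,z_4\in C$, the cross-ratio $q$ is a real number (four concyclic points have real cross-ratio, as one sees after Möbius-mapping $C$ to $\setR$); the modulus identity \eqref{eq:cr} forces $|q|=1$; and the requirement that $p_1,p_2,p_3,p_4$ be cyclically ordered means that $\{z_2,z_4\}$ separates $\{z_1,z_3\}$ on $C$, which is exactly the condition $q<0$. Thus $q=-1$. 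Solving this linear equation for $z_4$ gives the explicit rational formula
\[
z_4=\frac{2z_1z_3-z_2(z_1+z_3)}{z_1-2z_2+z_3},
\]
whose denominator vanishes iff $z_2=\tfrac12(z_1+z_3)$, which is excluded because it would put $p_1,p_2,p_3$ on a line. Hence $z_4$ exists, is unique, and depends analytically on $z_1,z_2,z_3$; composed with the Gram--Schmidt construction and the inverse map $\setC\to\Pi\subset\setR^3$, this produces the desired analytic $p_4(p_1,p_2,p_3)$.

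The main obstacle, as I see it, is not any single computation but rather the bookkeeping of the three geometric requirements (concyclicity, modulus identity, cyclic order) simultaneously, and verifying that together they pin down a single point. The cross-ratio reformulation is what makes all three conditions collapse into the single algebraic equation $q=-1$; once that reduction is in place, existence, uniqueness, and analyticity all fall out of an explicit formula, and the non-collinearity hypothesis is exactly what is needed to rule out the single degenerate denominator.
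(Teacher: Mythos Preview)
Your argument is correct; the paper itself does not prove this lemma, dismissing it as ``an easy exercise in elementary geometry,'' but your reduction to the complex cross-ratio condition $q=-1$ is precisely the characterization the paper singles out in the remark following the definition of conformal squares. The explicit rational formula you obtain for $z_4$ makes existence, uniqueness, and analytic dependence transparent, and your check that the denominator vanishes only in the excluded collinear case is exactly the right non-degeneracy observation.
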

We are now going to state the main definition, namely the one for discrete 
isothermic surfaces.
Originally \cite{bp}, discrete surfaces have been introduced as particular 
immersed lattices in $\reals^3$.
Having the continuous limit in mind, we give a slightly different definition,
which describes a continuous immersion in $\reals^3$, corresponding to a 
two-parameter family of lattices.
\begin{dfn}
  \label{dfn:discreteis}
  A map $F^\eps:\dom{r}{h}\to\reals^3$ is called (the parametrization of) a 
{\em $\eps$-discrete isothermic surface},
  if elementary $\eps$-squares are mapped to conformal squares in $\reals^3$.
\end{dfn}
\begin{rmk}
  Since no continuity is required for $F^\eps:\dom rh\to\setR^3$, 
  one can think of it --- at this point --- for instance as the piecewise constant extension 
  of a map $\tilde F^\eps:\mdom rh\to\setR^3$ that is only defined on a 
suitable lattice $\mdom rh\subset\dom rh$, e.g. on
  \begin{align*}
    \mdom rh = \left\{ (\xi,\eta)\in\dom rh\,;\,\frac\xi\eps+\frac\eta\eps\in\setZ\right\}.
  \end{align*}
\end{rmk}
Alternatively, one can say that $F^\eps:\dom{r}{h}\to\setR^3$ is a discrete isothermic surface,
if and only if 
the four vectors
\begin{align*}
  \tp_y\delta_xF^\eps,\,\tp_x\delta_yF^\eps,\,\atp_y\delta_xF^\eps,\,\atp_x\delta_yF^\eps
\end{align*}
always lie in one common plane and satisfy
\begin{align}
  \label{eq:defiso}
  \big\|\tp_y\delta_xF^\eps\big\|\,\big\|\atp_y\delta_xF^\eps\|
  = \big\|\tp_x\delta_yF^\eps\big\|\,\big\|\atp_x\delta_yF^\eps\|.
\end{align}
Note that this identity is a discrete replacement 
for the relation $\|F_x\|^2=\|F_y\|^2$ on smooth conformally parametrized surfaces.

\subsection{The discrete Bj\"orling problem}
We introduce the analog of the  Bj\"orling problem for $\eps$-discrete isothermic surfaces.
In difference to its continuous counterpart, its solution is immediate.
First, we need some more notation to formulate conditions on the data.
\begin{dfn}
  A function $f^\eps:\dom rh\to\reals^3$ is said to be \emph{non-degenerate}
  if neither any of the point triples
  \[\big(\atp_\xi f^\eps,\atp_\eta f^\eps,\tp_\xi f^\eps\big)(\xi,\eta), \] 
  nor any of the point triples
  \[\big(\atp_\xi f^\eps,\tp_\eta f^\eps,\tp_\xi f^\eps\big)(\xi',\eta')\]
  are collinear, where $(\xi,\eta),(\xi',\eta')\in\dom rh$ are arbitrary points
  such that these respective values of $f^\eps$ are defined.
  If collinearities occur, then $f^\eps$ is called \emph{degenerate}.
\end{dfn}
\begin{dfn}
  We call a function $f^\eps:\dom r{\epss}\to\reals^3$
  \emph{Bj\"orling data} for the construction of an $\eps$-discrete isothermic surface 
  if it is non-degenerate.
\end{dfn}
\begin{prp}
  \label{prp:dbjorling}
  Let $\bar h$ and $\eps>0$ with $r>\bar h>\epss$ and Bj\"orling data $f^\eps$ be given. 
  Then, there exists some maximal $h\in(\epss,\bar h]$
  and a unique $\eps$-discrete isothermic surface $F^\eps:\dom r{h}\to\reals^3$
  such that $F^\eps=f^\eps$ on $\dom r{\epss}$.
  Here \emph{maximal} has to be understood as follows:
  either $h=\bar h$, or the restriction of $F^\eps$ to $\dom r{h-\epss}$ is degenerate.
\end{prp}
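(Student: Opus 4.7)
The plan is to build $F^\eps$ inductively, one lattice row at a time, extending the given Bj\"orling data $f^\eps$ simultaneously upward and downward in $\eta$ and using Lemma~\ref{lem:construct} to fill in each new vertex. The data on $\dom r{\epss}$ is \emph{admissible} by default, since the strip is too thin to contain a full elementary $\eps$-square, and it is non-degenerate by hypothesis.

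The induction step runs as follows. Suppose $F^\eps$ has been constructed on $\dom r{h'}$ for some $h' \in [\epss,\bar h)$, with every elementary $\eps$-square already fitting inside $\dom r{h'}$ mapped to a conformal square. To extend upward, note that each new lattice vertex $P$ at height $h'+\epss$ equals $\tp_\eta \zeta$ for a unique lattice point $\zeta$ at height $h'$; the elementary $\eps$-square centered at $\zeta$ has its three other corners $\atp_\xi\zeta,\,\tp_\xi\zeta,\,\atp_\eta\zeta$ already in $\dom r{h'}$, and these are non-collinear by the first (i.e., $\atp_\eta$-involving) non-degeneracy condition. Lemma~\ref{lem:construct} then supplies the unique $F^\eps(P)$ completing that quadruple to a conformal square. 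The extension downward to the next row below $\eta = -h'$ is done symmetrically, using instead the second (i.e., $\tp_\eta$-involving) non-degeneracy condition at centers $\zeta$ on the lower edge. Uniqueness of $F^\eps$ is then automatic: property~\eqref{eq:defiso} applied to each elementary square used in the construction forces the newly placed vertex to coincide with the output of Lemma~\ref{lem:construct}.

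For maximality I would take $h$ to be the supremum of heights $h' \in (\epss,\bar h]$ up to which the above induction runs without obstruction. If $h = \bar h$ we are done; otherwise, the next attempted step must abort, and since Lemma~\ref{lem:construct} depends only on the three known corners being non-collinear, such an abortion necessarily signals the breakdown of a non-degeneracy condition on the already-constructed data. Tracing precisely which values of $F^\eps$ enter that condition identifies the degenerate sub-strip as $\dom r{h-\epss}$, matching the alternative in the statement. The only genuinely delicate point in the whole argument is combinatorial bookkeeping: one must verify that at each inductive step every new lattice vertex is the top-most (respectively bottom-most) corner of precisely one elementary $\eps$-square whose other three corners are already assigned, so that the extension is both well-defined and simultaneously consistent with the conformal-square requirement on all squares involved. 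Once $F^\eps$ has been defined at every lattice vertex of $\dom r h$, one finally extends it arbitrarily -- for instance piecewise constantly, as in the remark after Definition~\ref{dfn:discreteis} -- to a map on all of $\dom r h$.
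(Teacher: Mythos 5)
Your argument is essentially the paper's own proof, which in three sentences performs the same strip-by-strip extension via Lemma~\ref{lem:construct} and stops as soon as a collinearity obstructs the next completion; your version merely spells out the row-by-row bookkeeping, the up/down symmetry, and the uniqueness remark. The one assertion you leave unverified --- that the blocking collinearity is witnessed already by the restriction of $F^\eps$ to $\dom r{h-\epss}$ --- is likewise not addressed in the paper's proof (and careful tracing in fact only places the collinear triple inside $\dom r{h}$, since two of its three points may sit at heights in $(h-\epss,h]$), so this is a wrinkle in the statement's bookkeeping rather than a defect of your construction.
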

\begin{proof}
  The proof is a direct application of Lemma \ref{lem:construct}:
  from the data $f^\eps$ given on $\dom r{\epss}$, 
  one directly calculates the values of $F^\eps$ on $\dom r{\eps}$.
  These are then extended to $\dom r{3\epss}$ in the next step, and so on.
  The procedure works as long as no degeneracies occur.
\end{proof}

\subsection{Discrete quantities and basic relations}\label{discQuant}
Let some discrete isothermic surface $F^\eps:\dom{r}{h}\to\setR^3$ be given.
Below, we introduce quantities that play an analogous role for $F^\eps$ as 
$\cf$, $\cvx$, $\cvy$ etc.\ do for $F$.

Define the \emph{discrete conformal factors} $\cfx:\ndom{x}{h}\to\setR$ and 
$\cfy:\ndom{y}{h}\to\setR$, respectively, by
\begin{align*}
  e^\cfx = \|\delta_xF^\eps\|, \quad   e^\cfy = \|\delta_yF^\eps\|.
\end{align*}
Thanks to the property~\eqref{eq:defiso} of discrete isothermic surfaces, 
these seemingly different quantities are related to each other 
by the identity
\begin{align*}
  \tp_x\cfy + \atp_x\cfy = \tp_y\cfx + \atp_y\cfx
\end{align*}
that holds on $\ndom{xy}{h}$.
We may thus unambiguously define the \emph{discrete derivatives} 
$v,w:\ndom{xy}{h}\to\setR$ of the conformal factor by
\begin{align}
  \label{eq:defvw}
  v = \frac{\tp_x\cfy-\tp_y\cfx}{\eps} = \frac{\atp_y\cfx-\atp_x\cfy}{\eps},
  \quad
  w = \frac{\tp_x\cfy-\atp_y\cfx}{\eps} = \frac{\tp_y\cfx-\atp_x\cfy}{\eps}.
\end{align}
Next, define the \emph{discrete unit tangent vectors} 
$\fx:\ndom{x}{h}\to\sphere$ and $\fy:\ndom{y}{h}\to\sphere$, respectively, by
\begin{align*}
  \fx = e^{-\cfx}\delta_xF^\eps, \quad
  \fy = e^{-\cfy}\delta_yF^\eps. 
\end{align*}
Since conformal squares are planar, there is a natural notion of 
\emph{normal field} $N:\ndom{xy}{h}\to\sphere$,
namely
\begin{align*}
  N = \frac{\tp_y\delta_xF^\eps\times\tp_x\delta_yF^\eps}{\|\tp_y\delta_xF^\eps\times\tp_x\delta_yF^\eps\|}.
\end{align*}
With the help of the discrete orthonormal frame $(\fx,\fy,N)$, 
we introduce the \emph{discrete scaled principal curvatures} 
$\cvx:\ndom{xxy}{h}\to\setR$ and $\cvy:\ndom{xyy}{h}\to\setR$, respectively, by
\begin{equation}\label{eq:defPQ}
  \eps\cvx=-\langle\atp_xN\times\tp_xN,\fy\rangle, \quad
  \eps\cvy=\langle\atp_yN\times\tp_yN,\fx\rangle.
\end{equation}
Note that $\eps\cvx$ and $\eps\cvy$ are equal to $\sin\angle(\atp_xN,\tp_xN)$ 
and to $\sin\angle(\atp_yN,\tp_yN)$, respectively,
with the signs chosen to maintain consistency with the continuous quantities.

Finally, to facilitate the calculations below, we need two more discrete 
functions $\cfa,\cfb:\ndom{xy}h\to\setR$, given by
\begin{align}
  \label{eq:defomega}
  \begin{split}
    \eps\cfa 
    &= \frac{\langle\tp_y\delta_xF^\eps,\atp_x\delta_yF^\eps\rangle}{\|\tp_y\delta_xF^\eps\|\atp_x\delta_yF^\eps\|}
    = - \frac{\langle\atp_y\delta_xF^\eps,\tp_x\delta_yF^\eps\rangle}{\|\atp_y\delta_xF^\eps\|\|\tp_x\delta_yF^\eps\|},
    \\
    \eps\cfb 
    &= \frac{\langle\tp_y\delta_xF^\eps,\tp_x\delta_yF^\eps\rangle}{\|\tp_y\delta_xF^\eps\|\|\tp_x\delta_yF^\eps\|}
    = - \frac{\langle\atp_y\delta_xF^\eps,\atp_x\delta_yF^\eps\rangle}{\|\atp_y\delta_xF^\eps\|\|\atp_x\delta_yF^\eps\|}.
  \end{split}
\end{align}
The equalities follow since opposite angles in a conformal square sum up to $\pi$.
The two pairs $(v,w)$ and $(\cfa,\cfb)$ are just different representations of 
the same geometric information.
%
%
%
\begin{lem}
  \label{lem:confsq}
  There is a one-to-one correspondence between the pairs $(v,w)$ and $(\cfa,\cfb)$ of functions.
  Specifically, recalling the ${}^*$-notation introduced in~\eqref{eq:star},
 \begin{align}
    \label{eq:arclen}
    \sinh (\eps v) = \eps\frac{\cfa\cfb^*}{\cfa^*}
    \quad\mbox{and}\quad
    \sinh (\eps w) = \eps\frac{\cfb\cfa^*}{\cfb^*}.
  \end{align}
  Moreover, the pair $(v,\cfb)$ uniquely determines the pair $(\cfa,w)$, and vice versa.
\end{lem}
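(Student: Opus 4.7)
The strategy is to reduce the lemma to a direct planar-geometric computation on a single elementary conformal square: both pairs $(v,w)$ and $(\cfa,\cfb)$ are functions of the shape of the one planar cyclic quadrilateral $(P_1,P_2,P_3,P_4)$ formed by the four corner values of $F^\eps$. The first pair encodes the (logarithmic) side-length ratios, the second the cosines of two opposite interior angles. Because the conformal-square condition \eqref{eq:defiso} couples these two pieces of shape data, both are determined by the same two real parameters, and the asserted identities \eqref{eq:arclen} then follow from a direct application of the law of cosines.

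First I would parameterize the four side lengths $s_1,\ldots,s_4$ of the conformal square. From the definitions in \eqref{eq:defvw} it is immediate that $e^{\eps v} = s_2/s_3 = s_1/s_4$ and $e^{\eps w} = s_3/s_4 = s_2/s_1$, and the conformal-square condition \eqref{eq:defiso} reads $s_1 s_3 = s_2 s_4$. Writing $s_3 = \eps\|\tp_y\delta_xF^\eps\|$ as an overall scale, the remaining sides are $s_2 = s_3 e^{\eps v}$, $s_4 = s_3 e^{-\eps w}$, and $s_1 = s_3 e^{\eps(v-w)}$, with the constraint $s_1 s_3 = s_2 s_4$ automatic.

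Next, I would identify $\eps\cfa$ and $\eps\cfb$ with signed cosines of two opposite interior angles of the quadrilateral, at $P_4$ and $P_3$ respectively; this is a straightforward inspection of \eqref{eq:defomega} once one tracks the orientations of the edge vectors being paired. The two-fold consistency built into \eqref{eq:defomega} then merely expresses the classical fact that opposite interior angles of a cyclic quadrilateral are supplementary. Applying the law of cosines to each of the two triangles meeting along the diagonal $P_1 P_3$ and equating, and then doing the same along $P_2 P_4$, gives closed expressions for the two cosines purely in terms of the $s_i$. Substituting the parameterization above, numerator and denominator factor (the numerators becoming $(1-e^{2\eps v})(1+e^{-2\eps w})$ and $(1+e^{2\eps v})(1-e^{-2\eps w})$), and a short simplification yields the compact closed forms
\begin{equation*}
\eps\cfa = \tanh(\eps v)\cosh(\eps w),\qquad \eps\cfb = \tanh(\eps w)\cosh(\eps v).
\end{equation*}

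From these, a single application of $\cosh^2-\sinh^2=1$ gives $\cfa^* = S/\cosh(\eps v)$ and $\cfb^* = S/\cosh(\eps w)$, where $S:=\sqrt{1-\sinh^2(\eps v)\sinh^2(\eps w)}$, and the identities \eqref{eq:arclen} then follow by direct substitution. The ``vice versa'' statement is immediate: given $(v,\cfb)$, the relation $\eps\cfb = \tanh(\eps w)\cosh(\eps v)$ uniquely determines $w$ as an $\arsinh$ of an explicit quantity, after which $\cfa$ is read off from the first closed form; the opposite direction is analogous. The only real obstacle is bookkeeping: the edge vectors paired in the inner products of \eqref{eq:defomega} do not always share a common vertex, so identifying each expression with $\pm\cos(\text{interior angle})$ demands careful attention to orientations before the law-of-cosines computation can begin.
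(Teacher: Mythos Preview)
Your proposal is correct and follows essentially the same route as the paper: reduce to a single conformal square, identify $(v,w)$ with side-length ratios and $(\cfa,\cfb)$ with interior-angle cosines, apply the law of cosines along the two diagonals to obtain $\eps\cfa=\tanh(\eps v)\cosh(\eps w)$ and $\eps\cfb=\tanh(\eps w)\cosh(\eps v)$, and derive \eqref{eq:arclen} from there. Your computation of $\cfa^*=S/\cosh(\eps v)$ with $S=\sqrt{1-\sinh^2(\eps v)\sinh^2(\eps w)}$ is a slight streamlining of the paper's squaring-and-eliminating step, and your ``vice versa'' argument uses the $\tanh\cdot\cosh$ closed forms rather than \eqref{eq:arclen} directly, but these are cosmetic differences.
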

\begin{proof}
  This is a general statement about four geometric quantities defined for conformal squares.
  It thus suffices to consider a single conformal square 
  with vertices 
  \begin{align*}
    p_1=\atp_\eta F^\eps,\,p_2=\tp_\xi F^\eps,\,p_3=\tp_\eta 
F^\eps,\,p_4=\atp_\xi F^\eps.
  \end{align*}
  The respective four real numbers $v,w,\cfa,\cfb$ are given by
  \begin{align*}
    e^{\eps v} = \frac{\|p_2-p_1\|}{\|p_1-p_4\|} = 
\frac{\|p_3-p_2\|}{\|p_4-p_3\|}, \quad &\quad
    e^{\eps w} = \frac{\|p_3-p_2\|}{\|p_2-p_1\|} = \frac{\|p_3-p_4\|}{\|p_4-p_1\|},\\
    \eps\cfa = \cos(\angle p_1p_2p_3) = -\cos(\angle p_3p_4p_1), \quad &\quad
    \eps\cfb = \cos(\angle p_2p_3p_4) = -\cos(\angle p_4p_1p_2).
 \end{align*} 
  Observe that
  \begin{multline*}
    \|p_3-p_2\|^2+\|p_1-p_2\|^2-2\langle p_3-p_2,p_1-p_2\rangle 
    =\|p_3-p_4\|^2+\|p_1-p_4\|^2-2\langle p_3-p_4,p_1-p_4\rangle
  \end{multline*}
  since both expressions are equal to $\|p_3-p_1\|^2$. 
  Divide by $\|p_3-p_2\|^2$ and use the definitions of $v,w,\cfa,\cfb$
  to obtain, after simplification, that
  \begin{align}
    \label{eq:diag2}
    1 + e^{-2\eps w} - 2 \eps\cfa e^{-\eps w} =  e^{-2\eps v} (1 + e^{-2\eps w} 
+ 2 \eps\cfa e^{-\eps w}).
  \end{align}
  The analogous considerations with $\|p_4-p_2\|^2$ in place of $\|p_3-p_1\|^2$ 
  give~\eqref{eq:diag2} with $\cfb$ in place of $\cfa$, and with the roles of 
$w$ and $v$ exchanged.
  Clearly, these equations are uniquely solvable for $(\cfa,\cfb)$ in terms of $(v,w)$:
  \begin{align}
    \label{eq:help005}
   \eps\cfa = \tanh(\eps v)\,\cosh(\eps w),\quad
   \eps\cfb = \tanh(\eps w)\,\cosh(\eps v).
  \end{align}
  Note that in particular
  \begin{align}
    \label{eq:help006}
    \eps^2\cfa\cfb = \sinh(\eps v)\sinh(\eps w).
  \end{align}
  To derive~\eqref{eq:arclen} from here, take the square of the equations in 
\eqref{eq:help005}, 
  and express $\cosh^2$ and $\tanh^2$ in terms of $\sinh^2$ only.
  Then use~\eqref{eq:help006} to eliminate $\sinh^2(\eps w)$ from the first 
equation and $\sinh^2(\eps v)$ from the second one.
  This yields
 \begin{align*}
   \sinh^2(\eps v) = \Big(\eps\frac{\cfa\cfb^*}{\cfa^*}\Big)^2,
   \quad
   \sinh^2(\eps w) = \Big(\eps\frac{\cfb\cfa^*}{\cfb^*}\Big)^2.
  \end{align*}
  Now take the square root, bearing in mind that 
  $v$, $\cfa$ have the same sign, and $w$, $\cfb$ have the same sign 
by~\eqref{eq:help005}.

  Finally, to calculate $\cfa$ from a given $(v,\cfb)$ using the first relation 
in~\eqref{eq:arclen}, 
  it suffices to invert the (strictly increasing) function $\cfa\mapsto\cfa/\cfa^*$.
  Then, knowing $\cfa$ and $\cfb$, the value of $w$ can be obtained from the 
second relation in~\eqref{eq:arclen}.
\end{proof}
%
%
%
%
%
%
Recall that all discrete quantities defined above depend on the 
parameter $\eps$. To stress this fact, we will in the following use the 
superscript $\eps$.

For later reference, we draw some first consequences of the definitions above.
Specifically, we summarize the relations between 
the geometric quantities $(a^\eps,b^\eps,\cfx^\eps,\cfy^\eps)$, and, of course, 
to $F^\eps$ itself,
to the more abstract quantities $(v^\eps,w^\eps,\cvx^\eps,\cvy^\eps)$
that satisfy the Gauss-Codazzi system~\eqref{eq:gd1}--\eqref{eq:gd3} below.
These relations can be seen as a discrete analog of the frame 
equations~\eqref{eq:compat}\&\eqref{eq:matUV}.
\begin{lem}
  On $\ndom{xxyy}{h}$, one has
  \begin{align}
    \label{eq:reconf}
    &\delta_yF^\eps = \exp(\cfx^\eps)a^\eps,\quad \delta_xF^\eps = 
\exp(\cfy^\eps)b^\eps, \\
    \label{eq:reconu}
    &\delta_y\cfx^\eps = w^\eps-v^\eps, \quad \delta_x\cfy^\eps = w^\eps+v^\eps, 
\\
    \label{eq:recona}
    &\delta_ya^\eps = \left[\frac{(\cfa^\eps)^*}{(\cfb^\eps)^*}\cfb^\eps
-\cfa^\eps\right]\atp_xb^\eps
      + 
\frac1\eps\left[\frac{(\cfa^\eps)^*}{(\cfb^\eps)^*}-1\right]\atp_ya^\eps, \\
    \label{eq:reconb}    
    &\delta_xb^\eps = \left[\frac{(\cfa^\eps)^*}{(\cfb^\eps)^*}\cfb^\eps 
-\cfa^\eps\right]\atp_ya^\eps
      + 
\frac1\eps\left[\frac{(\cfa^\eps)^*}{(\cfb^\eps)^*}-1\right]\atp_xb^\eps.
  \end{align}
\end{lem}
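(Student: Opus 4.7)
The proof decomposes according to the four identities, which fall into two groups. Equations \eqref{eq:reconf} and \eqref{eq:reconu} are purely algebraic rewritings of definitions. Identity \eqref{eq:reconf} is a direct consequence of $\fx = e^{-\cfx}\delta_x F^\eps$ and $\fy = e^{-\cfy}\delta_y F^\eps$. Identity \eqref{eq:reconu} follows by combining the two equivalent representations of $v^\eps$ and $w^\eps$ in \eqref{eq:defvw}: adding and subtracting appropriate pairs yields $w^\eps - v^\eps = (\tp_y\cfx^\eps - \atp_y\cfx^\eps)/\eps = \delta_y\cfx^\eps$ and $w^\eps + v^\eps = (\tp_x\cfy^\eps - \atp_x\cfy^\eps)/\eps = \delta_x\cfy^\eps$.

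The substantive claims are \eqref{eq:recona} and \eqref{eq:reconb}. For \eqref{eq:recona}, I fix an interior base point and consider the elementary $\eps$-square with vertices $p_1,p_2,p_3,p_4$ labelled cyclically as in Figure~\ref{fig:square}. Since $F^\eps$ maps this square to a planar conformal square, the four images $F^\eps(p_j)$ lie in a common plane $\Pi\subset\setR^3$, and so do the four unit edge tangents $\atp_y a^\eps$, $\tp_x b^\eps$, $\tp_y a^\eps$, $\atp_x b^\eps$ (associated respectively to the edges $p_1p_2$, $p_2p_3$, $p_4p_3$, $p_1p_4$). By the non-degeneracy of the conformal square, the pair $(\atp_y a^\eps, \atp_x b^\eps)$ is linearly independent, hence a basis of $\Pi$. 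Consequently
\begin{equation*}
  \tp_y a^\eps = A\,\atp_y a^\eps + B\,\atp_x b^\eps
\end{equation*}
for unique scalars $A,B$ depending on the base point, and I determine them by taking inner products with the two basis vectors.

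The definition \eqref{eq:defomega} immediately yields $\langle\tp_y a^\eps,\atp_x b^\eps\rangle = \eps\cfa^\eps$ and $\langle\atp_y a^\eps,\atp_x b^\eps\rangle = -\eps\cfb^\eps$. The remaining inner product $\langle\tp_y a^\eps,\atp_y a^\eps\rangle$ comes from the planar geometry of the cyclic quadrilateral: these two unit vectors are tangents along the two x-type edges on opposite sides of the square, and a rotation argument using the interior angles at the intermediate vertices $p_2,p_3$ --- whose cosines equal $\eps\cfa^\eps$ and $\eps\cfb^\eps$ with positive sines $(\cfa^\eps)^*$ and $(\cfb^\eps)^*$, as established in the proof of Lemma~\ref{lem:confsq} --- allows me to express it via the angle-sum formula purely in terms of $\cfa^\eps,\cfb^\eps,(\cfa^\eps)^*,(\cfb^\eps)^*$. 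Solving the resulting $2\times 2$ linear system and simplifying with the single identity $((\cfb^\eps)^*)^2 = 1-\eps^2(\cfb^\eps)^2$ collapses $A$ into $(\cfa^\eps)^*/(\cfb^\eps)^*$; inserting into $\delta_y a^\eps = (\tp_y a^\eps - \atp_y a^\eps)/\eps$ reproduces \eqref{eq:recona}. Equation \eqref{eq:reconb} follows by the symmetric argument with $x$ and $y$ interchanged (and correspondingly $a^\eps\leftrightarrow b^\eps$, $\cfa^\eps\leftrightarrow\cfb^\eps$). The main obstacle I foresee is the geometric computation of $\langle\tp_y a^\eps,\atp_y a^\eps\rangle$: one must fix the orientation of the plane $\Pi$ consistently with the normal $N$ and handle the fact that the interior angles of the conformal square need not all be convex. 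Since the $*$-quantities are defined as positive square roots, the trigonometric identifications remain unambiguous, and the remaining simplifications are mechanical.
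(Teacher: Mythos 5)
Your handling of \eqref{eq:reconf} and \eqref{eq:reconu} is what the paper does (the latter are indeed just rearrangements of \eqref{eq:defvw}), and for \eqref{eq:recona} your core idea is exactly the paper's: by planarity of the image quadrilateral one may expand $\tp_ya^\eps=\mu_a\,\atp_ya^\eps+\mu_b\,\atp_xb^\eps$ and pin down $\mu_a,\mu_b$ by two scalar conditions, using $\langle\tp_ya^\eps,\atp_xb^\eps\rangle=\eps\cfa^\eps$ and $\langle\atp_ya^\eps,\atp_xb^\eps\rangle=-\eps\cfb^\eps$ from \eqref{eq:defomega}. The only real difference is the second condition: the paper closes the system with the norm constraint $1=\|\tp_ya^\eps\|^2=\mu_a^2+\mu_b^2-2\eps\mu_a\mu_b\cfb^\eps$, which needs no further geometry, whereas you evaluate $\langle\tp_ya^\eps,\atp_ya^\eps\rangle$ by an angle-sum argument. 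Both routes give $\mu_a=(\cfa^\eps)^*/(\cfb^\eps)^*$; yours trades the paper's root selection in a quadratic for the orientation bookkeeping you flag, and that worry is harmless: cyclically ordered concyclic vertices bound a convex quadrilateral, so all interior angles lie in $(0,\pi)$ and their sines are the positive roots $(\cfa^\eps)^*$, $(\cfb^\eps)^*$.

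Two cautions. First, \eqref{eq:reconb} does \emph{not} follow from \eqref{eq:recona} by the substitution $\cfa^\eps\leftrightarrow\cfb^\eps$: by \eqref{eq:defomega} one has $\langle\tp_xb^\eps,\atp_ya^\eps\rangle=-\eps\cfa^\eps$, with the opposite sign to $\langle\tp_ya^\eps,\atp_xb^\eps\rangle=+\eps\cfa^\eps$, so the mirrored computation with the ansatz $\tp_xb^\eps=\lambda_b\,\atp_xb^\eps+\lambda_a\,\atp_ya^\eps$ must be redone (this is what the paper means by ``interchanging the roles of $a^\eps$ and $b^\eps$''), and it produces $\lambda_a=\eps\big(\tfrac{(\cfa^\eps)^*}{(\cfb^\eps)^*}\cfb^\eps-\cfa^\eps\big)$, i.e.\ the sign structure of \eqref{eq:reconb} is genuinely different from that of the $a$-equation. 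Second, do not force your ``mechanical simplification'' to reproduce the printed sign in \eqref{eq:recona}: solving either your linear system or the paper's (its second equation gives $\mu_b=\eps(\cfa^\eps+\mu_a\cfb^\eps)$) yields the coefficient $\tfrac{(\cfa^\eps)^*}{(\cfb^\eps)^*}\cfb^\eps+\cfa^\eps$ in front of $\atp_xb^\eps$, which is also what the smooth limit demands ($\partial_y(e^{-\cf}F_x)=\cf_x\,e^{-\cf}F_y=(v+w)\,e^{-\cf}F_y$ by \eqref{eq:compat}\&\eqref{eq:matUV}, while $\partial_x(e^{-\cf}F_y)=(w-v)\,e^{-\cf}F_x$ matches the $-\cfa^\eps$ in \eqref{eq:reconb}); the printed sign in \eqref{eq:recona}, like the swapped indices in \eqref{eq:reconf}, appears to be a typo rather than something your argument should match.
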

\begin{proof}
  The two equations in~\eqref{eq:reconu} are obtained by rearranging the 
identities in~\eqref{eq:defvw}.
  For the derivation of~\eqref{eq:recona}, one makes the ansatz
  \begin{align*}
    \tp_ya^\eps = \mu_a\atp_ya^\eps+\mu_b\atp_xb^\eps.
  \end{align*}
  Such a representation of $\tp_ya^\eps$ must exist since elementary squares 
are mapped to (flat) quadrilaterals by $F^\eps$. 
  The coefficients $\mu_a$ and $\mu_b$ can be determined by solving the system of equations
  \[ 1=\|\tp_ya^\eps\|^2=\mu_a^2+\mu_b^2-2\eps\mu_a\mu_b\cfb^\eps, \quad 
  \eps\cfa^\eps = \langle\tp_ya^\eps,\atp_xb^\eps\rangle = 
-\eps\mu_a\cfb^\eps+\mu_b.  \]
  The analogous ansatz --- with the roles of $a^\eps$ and $b^\eps$ interchanged 
--- leads to~\eqref{eq:reconb}. 
\end{proof}

\subsection{Discrete Gauss-Codazzi System}
This section is devoted to derive a discrete version 
of the Gauss-Codazzi equations~\eqref{eq:gc1}--\eqref{eq:gc3}.
The following definition is needed to classify the difference between the continuous and the discrete system.
\begin{dfn}
  \label{dfn:aa}
  A family $(h_\eps)_{\eps>0}$ of real functions on respective domains $D_\eps\subset\setR^n$ 
  is called \emph{asymptotically analytic on $\setC^n$}  if the following is true.
  For every $M>0$, there is an $\eps(M)>0$ such that each $h_\eps$ with $0<\eps<\eps(M)$
  extends from $D_\eps$ to a complex-analytic function $\tilde h_\eps:\disk_M^n\to\setC$ 
  on the $n$-dimensional complex multi-disc 
  \begin{align*}
    \disk_M^n = \big\{ z=(z_1,\ldots,z_n)\in\setC^n\,\big|\,|z_j|<M\ 
\text{for each $j=1,\ldots,n$}\big\}.
  \end{align*}
  And the extensions $\tilde h_\eps$ are bounded on $\disk_M^n$, uniformly in $0<\eps<\eps(M)$.
\end{dfn}
The prototypical example for a family $(h_\eps)_{\eps>0}$ that is asymptotically analytic on $\setC$ 
is given by $h_\eps(z)=1/z^*=(1-\eps^2z^2)^{-1/2}$.
It is further easily seen that also the functions $g_\eps=\eps^{-2}(h_\eps-1)$ form such a family.
This is a very strong way of saying that $h_\eps=1+O(\eps^2)$.
%
%
\begin{prp}
  \label{prp:gd}
  There are four families $(h_{1,\eps})_{\eps>0},\ldots,(h_{4,\eps})_{\eps>0}$ 
of asymptotically analytic functions on $\setC^8$ for which the following is 
true:
  let any $\eps$-discrete isothermic surface $F^\eps:\dom rh\to\setR^3$ be 
given,
  and define the functions $v^\eps,w^\eps,\cvx^\eps,\cvy^\eps$ accordingly.
  Then the following system of discrete equations is satisfied on 
$\ndom{xy}{h}$:
  \begin{align}
    \label{eq:gd1}
    \delta_\eta v^\eps &= \delta_\xi w^\eps, \\
    \label{eq:gd1a}
    \delta_\eta w^\eps &= \delta_\xi v^\eps - (\atp_y\cvx^\eps)(\atp_x\cvy^\eps) + \eps h^\eps_2(\tp\sol^\eps), \\
    \label{eq:gd2}
    \delta_y\cvx^\eps &= (\atp_x\cvy^\eps)(\atp_\eta w^\eps - \tp_\xi v^\eps ) + \eps^2 h^\eps_3(\tp\sol^\eps) \\
    \label{eq:gd3}
    \delta_x\cvy^\eps &= (\atp_y\cvx^\eps)(\atp_\eta w^\eps + \atp_\xi v^\eps ) + \eps h^\eps_4(\tp\sol^\eps),
  \end{align}
  where the $h^\eps_j$ are evaluated on
  \begin{align*}
    \tp\sol^\eps = \big(\tp_\xi v^\eps,\tp_\xi w^\eps,\atp_\xi v^\eps,\atp_\xi 
w^\eps,\atp_\eta v^\eps,\atp_\eta w^\eps,\atp_y\cvx^\eps,\atp_x\cvy^\eps\big).
  \end{align*}
\end{prp}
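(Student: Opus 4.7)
The plan is to mirror the continuous derivation in Lemma~\ref{lem:geometry}: set up discrete frame transition matrices between adjacent lattice vertices, then read off \eqref{eq:gd1}--\eqref{eq:gd3} from the compatibility conditions that must hold because the surface closes up around each elementary $\eps$-square. The $\eps$-corrections and the asymptotic analyticity then follow from the explicit expansions provided by Lemma~\ref{lem:confsq}.

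\textbf{Step 1: The compatibility equation \eqref{eq:gd1}.} This is immediate from the two equivalent formulas for $v^\eps$ and $w^\eps$ in \eqref{eq:defvw}. Indeed, $\eps(\delta_\eta v^\eps)$ and $\eps(\delta_\xi w^\eps)$ both reduce to the same telescoping linear combination of shifted values of $\cfx^\eps$ and $\cfy^\eps$, so \eqref{eq:gd1} holds as an exact identity with no remainder term at all.

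\textbf{Step 2: Discrete frame rotations.} At each vertex $\zeta$ in $\ndom{xxyy}{h}$, the unit tangent vectors $\fx^\eps,\fy^\eps$ and the normal $N^\eps$ provide an orthonormal triple. Equations \eqref{eq:recona}--\eqref{eq:reconb} together with \eqref{eq:defPQ} express the passage from $\zeta$ to its $x$- or $y$-neighbor as an explicit rotation of this triple whose entries are polynomial in $v^\eps,w^\eps,\cvx^\eps,\cvy^\eps$, in $(\cfa^\eps)^\st,(\cfb^\eps)^\st$, and in their reciprocals, with $\cfa^\eps,\cfb^\eps$ given by \eqref{eq:help005}. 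Using $\sinh(\eps v^\eps)=\eps v^\eps\!+\!O(\eps^3)$, $\cosh(\eps v^\eps)=1\!+\!O(\eps^2)$, and the analogous expansions from \eqref{eq:arclen}, each such discrete rotation matrix equals $I+\eps\,U^{\eps}$ where $U^\eps$ agrees with the continuous transition matrix from \eqref{eq:matUV} up to a correction that is analytic in the relevant arguments and of order $O(\eps^2)$.

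\textbf{Step 3: Compatibility around an elementary square.} The continuous compatibility $U_y-V_x=UV-VU$ has a discrete counterpart: the product of the four discrete rotations going around an elementary $\eps$-square equals the identity. Expanding this product to first non-trivial order, the $(1,3)$- and $(2,3)$-entries yield the discrete Codazzi equations \eqref{eq:gd2} and \eqref{eq:gd3}, respectively, while the $(1,2)$-entry yields the discrete Gauss equation \eqref{eq:gd1a}. The specific pattern of shifts $\tp_x,\atp_x,\tp_y,\atp_y$ on $\cvx^\eps,\cvy^\eps$ and $\tp_\xi,\atp_\xi,\atp_\eta$ on $v^\eps,w^\eps$ that appears on the right-hand sides is dictated precisely by the geometric position of the faces and edges around which the frame rotates. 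The improved order $\eps^2$ in \eqref{eq:gd2} (as opposed to $\eps$ in \eqref{eq:gd1a} and \eqref{eq:gd3}) reflects the cancellation of odd terms when $\delta_y\cvx^\eps$ is computed symmetrically over the two faces adjacent to the horizontal edge.

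\textbf{Step 4: Asymptotic analyticity of the remainders.} For any fixed $M>0$, each building block $z\mapsto z^*=\sqrt{1-\eps^2z^2}$ is holomorphic and bounded on $\disk_M$ uniformly in $\eps<\eps(M):=1/M$, and the same holds for $(z^*)^{-1}$, $\sinh(\eps z)/\eps$, $\cosh(\eps z)$, and their products and sums. Since each $h_j^\eps$ is obtained algebraically from these building blocks by subtracting off the continuous leading term and dividing by the appropriate power of $\eps$, it extends to a holomorphic, uniformly bounded function on $\disk_M^8$, i.e.\ the family $(h_j^\eps)_{\eps>0}$ is asymptotically analytic on $\setC^8$ in the sense of Definition~\ref{dfn:aa}.

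The main obstacle is Step~3: one must carry out the expansion of the product of four discrete rotations carefully enough to identify the exact combinatorial arrangement of shifts on the right-hand sides of \eqref{eq:gd1a}--\eqref{eq:gd3}. Matching these correctly is what keeps the error terms at orders $\eps$ and $\eps^2$ respectively; any less symmetric choice would introduce $O(1)$ residues that would spoil the convergence analysis of Section~\ref{sec:conv}.
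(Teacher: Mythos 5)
Your Step 1 is fine and is exactly the paper's argument: the two representations of $v^\eps,w^\eps$ in \eqref{eq:defvw} give the exact identity \eqref{eq:gd1} with no remainder. The problem lies in Steps 2--3, which are precisely where the content of the proposition sits and which you do not carry out. First, the premise of Step 2 is inaccurate: $(\fx^\eps,\fy^\eps,N^\eps)$ is \emph{not} an orthonormal triple at a vertex --- by \eqref{eq:defomega} one has $\langle \fx^\eps,\fy^\eps\rangle=\pm\eps\cfa^\eps$ or $\pm\eps\cfb^\eps$, and the three fields live on different sublattices ($x$-edges, $y$-edges, faces) --- so there is no exact ``discrete rotation $I+\eps U^\eps$'' between neighbouring vertices without a careful construction that you do not supply; moreover the relevant closure condition is around the four faces incident to a vertex (the configuration of Figure~\ref{fig:plaque}), not ``around an elementary $\eps$-square''. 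Second, and more importantly, the proposition is not merely the asymptotic statement that the discrete equations hold up to $O(\eps)$: it asserts that the residuals have the exact form $\eps h_2^\eps(\tp\sol^\eps)$, $\eps^2 h_3^\eps(\tp\sol^\eps)$, $\eps h_4^\eps(\tp\sol^\eps)$ with $h_j^\eps$ functions of \emph{only} the eight listed shifted quantities, uniformly bounded and analytic on polydiscs. Establishing this requires the explicit computation you defer as ``the main obstacle'': the paper expands $a_+$ and $b_+$ in the non-orthonormal basis $(a_0,b_0,N_0)$ (equations \eqref{eq:help007}, \eqref{eq:f1}, \eqref{eq:f5}), solves for the coefficients from the norm and inner-product constraints \eqref{eq:f2}--\eqref{eq:f4}, obtains the exact closed formula \eqref{eq:f6} for $\cfb_+$, computes $\langle a_+,N_L\rangle$ in two ways to extract $\cvx_+$, and only then converts back from $(\cfa,\cfb)$ to $(v,w)$ via Lemma~\ref{lem:confsq}, checking along the way that the auxiliary frame data $a_0,b_0,\cfx_0,\cfy_0,N_0,N_L,N_R$ cancel from the final formulas. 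Without some equivalent of this computation your argument does not yet prove the stated structure of the remainders.

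A further concrete point: your explanation of the improved order $\eps^2$ in \eqref{eq:gd2} (``cancellation of odd terms by symmetry over the two adjacent faces'') is not substantiated and does not match what actually happens. In the paper the extra order comes out of the explicit identity $\cfb_+^*\cvx_+=\cfa_R^*\cvy_0^*\cvx_0+\eps(\cfa_R^*\cvx_0^*\cfb_0-\cfb_0^*\cfa_R)\cvy_0$ after substituting \eqref{eq:f6}, and it is tied to the specific shifted argument $\tp_\xi v^\eps$ appearing on the right-hand side of \eqref{eq:gd2}; the structurally similar equation \eqref{eq:gd3}, with $\atp_\xi v^\eps$ instead, only admits an $O(\eps)$ remainder. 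So the precise shift pattern is an output of the exact computation, not something that can be fixed in advance by a symmetry heuristic. Your Step 4 (asymptotic analyticity of the building blocks $z^*$, $1/z^*$, $\sinh(\eps z)/\eps$, $\cosh(\eps z)$) is correct and is how the paper argues as well, but it only becomes applicable once the exact formulas for the residuals are in hand.
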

\begin{rmk}
  The equations~\eqref{eq:gd1}--\eqref{eq:gd3} are \emph{explicit in 
$\eta$-direction} in the sense that they express
  the ``unknown'' quantities $\tp_\eta v^\eps$, $\tp_\eta w^\eps$, 
$\tp_y\cvx^\eps$ and $\tp_x\cvy^\eps$
  in terms of the ``given'' eight quantities summarized in $\tp\sol^\eps$.  
\end{rmk}
%
%
The rest of this section is devoted to the proof of Proposition~\ref{prp:gd}.
Since $\eps>0$ is fixed in the derivation of~\eqref{eq:gd1}--\eqref{eq:gd3},
we shall omit the superscript $\eps$ on the occurring quantities.

For the derivation of~\eqref{eq:gd1}--\eqref{eq:gd3}, one can obviously work 
locally:
it suffices to fix some point in $\ndom{xxyy}h$ and to consider
the eight values of $v$, $w$ on the midpoints of the four elementary squares incident to that vertex,
and the four values of $\cvx$, $\cvy$ on the respective connecting edges.

The setup is visualized in Figure~\ref{fig:plaque} below.
The ``unknown'' quantities $v_+,w_+$ and $\cvx_{+},\cvy_{+}$ are marked by $\circ$, 
the ``given'' quantities $v_0,w_0,v_L,w_L,v_R,w_R$ and $\cvx_{0},\cvy_{0}$ are marked by $\bullet$.
To facilitate the calculations, 
we also assume that values for $a_0,b_0,\cfx_0,\cfy_0,N_0,N_L,N_R$ are given;
and then obtain the values of $a_+,b_+,\cfx_+,\cfy_+,N_+$, see Figure~\ref{fig:plaque} right.
Naturally, the final formulas for $v_+,w_+$ and $\cvx_{+},\cvy_{+}$ will be 
independent of these quantities.
\begin{figure}[htbp]
  \setlength{\unitlength}{0.3em}
  \centering
  \begin{picture}(40,40)(-20,-20)
    \multiput(0,-20)(-10,10){3}{\line(1,1){20}}
    \multiput(0,-20)(10,10){3}{\line(-1,1){20}}
    \put(0,10){\circle{2}} 
    \put(-4,12){$v_+,w_+$}
    \put(5,5){\circle{2}}
    \put(-1,4){$\cvy_+$}
    \put(-5,5){\circle{2}}
    \put(-9,3){$\cvx_+$}
    \put(10,0){\circle*{2}}
    \put(6,2){$v_R,w_R$}
    \put(-10,0){\circle*{2}}
    \put(-14,-3){$v_L,w_L$}
    \put(5,-5){\circle*{2}}
    \put(6,-5){$\cvx_0$}
    \put(-5,-5){\circle*{2}}
    \put(-3.9,-6){$\cvy_0$}
    \put(0,-10){\circle*{2}}
    \put(-4,-13){$v_0,w_0$}
  \end{picture}
  \hspace{1cm}
  \begin{picture}(40,40)(-20,-20)
    \multiput(0,-20)(-10,10){3}{\line(1,1){20}}
    \multiput(0,-20)(10,10){3}{\line(-1,1){20}}
    \put(0,10){\circle{2}} 
    \put(-1,12){$N_+$}
    \put(5,5){\circle{2}}
    \put(3,7){$\fx_+,\cfx_+$}
    \put(-5,5){\circle{2}}
    \put(-8,2.5){$\fy_+,\cfy_+$}
    \put(10,0){\circle*{2}}
    \put(9,2){$N_R$}
    \put(-10,0){\circle*{2}}
    \put(-12,-3){$N_L$}
    \put(5,-5){\circle*{2}}
    \put(6.2,-5){$\fy_0,\cfy_0$}
    \put(-5,-5){\circle*{2}}
    \put(-7,-3.5){$\fx_0,\cfx_0$}
    \put(0,-10){\circle*{2}}
    \put(-1,-13){$N_0$}
  \end{picture}
  \caption{Four elementary squares with discrete quantities for the Cauchy problem.}
  \label{fig:plaque}
\end{figure}
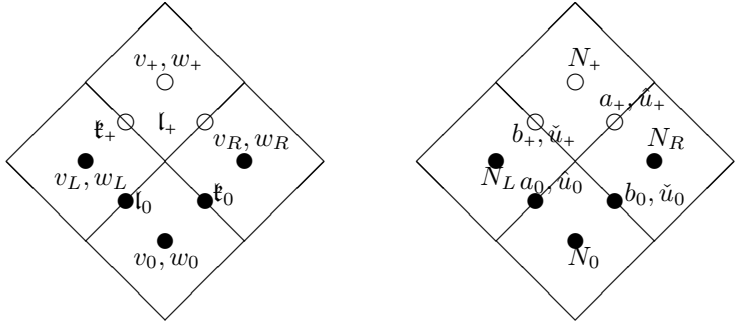

\subsubsection{Derivation of equation~\eqref{eq:gd1}}
Compare the following two alternative ways to calculate $\cfx_{+}$, 
the logarithmic length of the edge separating the right and the top plaquettes, 
from $\cfx_0$, the logarithmic length of the edge between the plaquettes at bottom and left:
\[ e^{\eps\cfx_0} e^{\eps v_0} e^{\eps w_R} = e^{\eps\cfx+} 
= e^{\eps\cfx_0} e^{\eps w_L} e^{\eps v_+} \]
holds by property~\eqref{eq:defvw} of the functions $v$ and $w$.
Take the logarithm to obtain~\eqref{eq:gd1}.

\subsubsection{Derivation of equation~\eqref{eq:gd1a}}
%
%
First recall that by Lemma~\ref{lem:confsq}, there is a one-to-one 
correspondence between $(v,w)$ and $(\cfa,\cfb)$,
so we can assume that values for $(\cfa_0,\cfb_0)$, $(\cfa_L,\cfb_L)$, 
$(\cfa_R,\cfb_R)$ are given as well.
Using that $N_R$ is the normalized cross product $a_+\times b_0$,
it is elementary to derive the following representation of $a_+$:
\begin{align}
  \label{eq:help007}
  a_{+} = \eps\cfa_Rb_0 + \cfa_R^* (b_0\times N_R).  
\end{align}
Taking the scalar product with $N_0$, one obtains
\[ \langle a_+,N_0 \rangle = \cfa_R^*\langle b_0,N_R\times N_0\rangle = \eps\cfa_R^*\cvx_0. \]
Hence $a_{+}$ can be expanded in the basis $a_{0}$, $b_{0}$ and $N_0$ as follows:
\begin{equation}
  \label{eq:f1}
  a_{+} = \mu_a a_{0} + \mu_b b_{0} + \eps\cfa_R^*\cvx_{0}N_0 
\end{equation}
with some real coefficients $\mu_a$ and $\mu_b$ to be determined.
Calculating the square norm on both sides gives
\begin{equation}
  \label{eq:f2}
  1 = \mu_a^2 + \mu_b^2 + 2\eps\cfb_0\mu_a\mu_b + \eps^2(\cfa_R^*)^2\cvx_{0}^2,
\end{equation}
and the scalar product with $b_{0}$ yields
\begin{equation}
  \label{eq:f3}
  \eps\cfa_R = \eps \cfb_0\mu_a + \mu_b .
\end{equation}
Use~\eqref{eq:f3} to eliminate $\mu_b$ from~\eqref{eq:f2}, then solve for 
$\mu_a$. This gives
\begin{eqnarray}
  \label{eq:f4}
  \mu_a = \frac{\cfa_R^*\cvx_0^*}{{\cfb}_0^*},
  \qquad
  \mu_b = \eps\cfa_R - \eps\frac{\cfa_R^*\cvx_0^*}{\cfa_0^*}\cfb_0.
\end{eqnarray}
On the other hand, starting from
\begin{eqnarray}
  \label{eq:f5}
  b_{+} = \lambda_b b_{0} + \lambda_a a_{0} + \eps \cfa_L^* \cvy_{0} N_0
\end{eqnarray}
instead of~\eqref{eq:f1}, one obtains by analogous calculations that
\begin{eqnarray*}
  \lambda_b = \frac{\cfa_L^*\cvy_0^*}{\cfb_0^*},
  \qquad
  \lambda_a = -\eps\cfa_L - \eps\frac{\cfa_L^*\cvy_0^*}{\cfb_0^*} \cfb_0.
\end{eqnarray*}
Since $\eps\cfb_+=-\langle a_+,b_+\rangle$, it eventually follows that
\begin{multline} \label{eq:f6}
  \qquad \cfb_+ = \frac{\cfa_L^*\cfa_R^*\cvx_{0}^*\cvy_{0}^*}{(\cfb_0^*)^2}\cfb_0 
  - \frac{\cfa_L^*\cvy_{0}^*}{\cfb_0^*}\cfa_R 
  + \frac{\cfa_R^*\cvx_{0}^*}{\cfb_0^*}\cfa_L
  - \eps \cfa_R^*\cfa_L^*\cvx_{0}\cvy_{0} \\
  + \eps^2\cfb_0 \left(-\cfa_R\cfa_L
    +\frac{\cfa_R^*\cfa_L\cvx_{0}^*-\cfa_R\cfa_L^*\cvy_0^*}{\cfb_0^*}\cfb_0
    +\frac{\cfa_R^*\cfa_L^*\cvx_{0}^*\cvy_0^*}{(\cfb_0^*)^2}\cfb_0^2\right) .
\end{multline}
Next, recall that one may consider $(v,w)$ as a function of $(\cfa,\cfb)$.
More precisely, by~\eqref{eq:arclen}, one has that $v=\cfa+O(\eps^2)$ and 
$w=\cfb+O(\eps^2)$ 
in the sense that the family of functions
\begin{align*}
  (\cfa,\cfb)\mapsto&\big(\eps^{-2}(v-\cfa),\eps^{-2}(w-\cfb)\big) \\
  &=\bigg(\frac1{\eps^2}\bigg[\eps^{-1}\arsinh\Big(\eps\frac{\cfa\cfb^*}{\cfa^*}\bigg)-\cfa\bigg],
  \frac1{\eps^2}\bigg[\eps^{-1}\arsinh\Big(\eps\frac{\cfb\cfa^*}{\cfb^*}\bigg)-\cfb\bigg]\bigg)
\end{align*}
is asymptotically analytic on $\setC^2$, see Definition~\ref{dfn:aa}.
Observe further that $\cfa^*=1+O(\eps^2)$ etc., again in the sense of asymptotic analyticity.
With this, it is straight-forward to conclude \eqref{eq:gd1a} 
from~\eqref{eq:f6}.

\subsubsection{Derivation of equation~\eqref{eq:gd2}}
In analogy to~\eqref{eq:help007}, one obtains by elementary considerations 
the following representation of $a_+$: 
\[ a_{+} = -\eps\cfb_+b_{+} + \cfb_+^*(b_{+}\times N_+). \]
Using the definition~\eqref{eq:defPQ} of $\cvx$, 
it then follows that
\[ \langle a_+,N_L \rangle = \cfb_+^*\langle b_+,N_+\times N_L\rangle = \eps \cfb_+^*\cvx_+. \]
On the other hand, the computation~\eqref{eq:f1}--\eqref{eq:f4} implies that
\begin{align*}
  \langle a_{+}, N_L\rangle 
  &= \mu_b \langle b_0,N_L\rangle +  \eps \cfa_R^*\cvx_0 \langle N_0,N_L \rangle \\ 
  &= \frac{\mu_b}{\cfa_L^*} \langle b_0, a_0\times b_+\rangle + \eps \cfa_R^* \cvy_0^*\cvx_0 \\
  &= -\frac{\mu_b\cfb_0^*}{\cfa_L^*} \langle b_+, N_0\rangle + \eps\cfa_R^*\cvy_0^*\cvx_0 \\
  &= -\eps^2(\cfb_0^*\cfa_R - \cfa_R^*\cvx_0^*\cfb_0) \cvy_0  + \eps\cfa_R^* \cvy_0^*\cvx_0.
\end{align*}
In combination, this yields
\begin{align*} 
  \cfb_+^*\cvx_+ 
  = \cfa_R^* \cvy_0^*\cvx_0 + \eps(\cfa_R^*\cvx_0^*\cfb_0 - \cfb_0^*\cfa_R) \cvy_0.
\end{align*}
We can now substitute \eqref{eq:f6} to express the unknown $\cfb_+$ in terms 
of the known quantities only.
Using once again that $\cfb_+^*=1+O(\eps^2)$ in the sense discussed above, 
we arrive at~\eqref{eq:gd2}.

The derivation of equation~\eqref{eq:gd3} is analogous.

\section{The Abstract Convergence Result}\label{sec:conv}
In this section, we analyze the convergence of solutions to the classical  
Gauss-Codazzi system~\eqref{eq:gc1}--\eqref{eq:gc3}
by solutions to the discrete system~\eqref{eq:gd1}--\eqref{eq:gd3}.
This is the core part of the convergence proof, 
from which our main result will be easily deduced in the next section.

\subsection{Domains}
A key concept in the proof is to work with analytic extensions 
of the quantities $v$, $w$, $\cvx$ and $\cvy$ defined in 
Section~\ref{discQuant}.
The analytic setting forces us to introduce yet another class of domains, 
and corresponding spaces of real analytic functions.
In the following, we assume that $r>0$ and $\brho>0$ are fixed parameters
(which will be frequently omitted in notations),
while $h\in(0,\brho)$ and $\eps>0$ may vary, with the restriction that $\eps<h$.

For each domain $\dom rh$,
introduce its \emph{analytic fattening} $\xcdom h$ as follows:
\begin{align*}
  \xcdom h = \left\{ (\xi,\eta)\in\setC\times\setR\,;\,
    \exists(\xi',\eta)\in\dom rh\text{ s.t. }|\xi-\xi'|/\brho+|\eta|/h<1\right\}.
\end{align*}
On these domains, we introduce the function class
\begin{align*}
  \xanl h := \big\{ f:\xcdom h\to\setC\,;\,\text{$f(\cdot,\eta)$ is real 
analytic, for each $\eta$}\big\}.
\end{align*}
Notice that we require analyticity with respect to $\xi$, but not even 
continuity with respect to $\eta$.
Next, introduce semi-norms $\xsnrm\eta\rho{\cdot}$ for functions $f\in\xanl h$, 
depending on parameters $\eta\in[-h,h]$ and $\rho\in[0,\brho]$ with 
$\rho/\brho+|\eta|/h<1$ as follows:
\begin{align*}
  \xsnrm\eta\rho{f} = \sup\left\{ |f(\xi,\eta)|\,;\,\,\xi\in\setC\text{ s.t. }
    \exists(\xi',\eta)\in\dom rh\text{ with }|\xi-\xi'|<\rho\right\}.
\end{align*}
These semi-norms are perfectly suited to apply Cauchy estimates;
indeed, one easily proves with the Cauchy integral formula that
\begin{align}
  \label{eq:cauchy}
  \xsnrm\eta\rho{\partial_\xi f} \le \frac1{\rho'-\rho}\xsnrm\eta{\rho'}{f},
\end{align}
provided that $\rho'>\rho$.
The semi-norms are now combined into a genuine norm  $\xnrm h{\cdot}$ on $\xanl h$ as follows:
\begin{align}
  \label{eq:domnorm}
  \xnrm h{f} = \sup\left\{ \wgt(\eta,\rho)\xsnrm\eta\rho{f}\,;\,\frac{|\eta|}h+\frac\rho{\brho}<1\right\},
\end{align}
where the positive weight $\wgt$ is given by
\begin{align}
  \label{eq:wgt}
  \wgt(\eta,\rho) = 1-\frac{|\eta|/h}{1-\rho/\brho}.
\end{align}
This norm makes $\xanl h$ a Banach space.

There is another semi-norm $\xdnrm h{\delta}{\cdot}$ that will be of importance below:
for each $\delta\in[0,1]$, let
\begin{align*}
  \xdnrm h{\delta}{f} = \sup\left\{\xsnrm\eta\rho{f}\,;\,\frac{|\eta|}h+\frac{\rho}{\brho}\le1-\delta\right\}.
\end{align*}
By definition~\eqref{eq:wgt} of the weight $\wgt$, the following estimate is 
immediate:
\begin{align}
  \label{eq:thenorms}
  \xdnrm h\delta{f} \le \delta^{-1}\xnrm h{f},
\end{align}
provided that $\delta>0$.

Replacing $\dom rh$ by $\ndom{xy}h$ above yields definitions 
for analytically fattened domains $\cdom{xy}h$ 
with respective spaces $\anl{xy}h$, semi-norms $\snrm{xy}\eta\rho{\cdot}$ 
and $\dnrm{xy}h\delta{\cdot}$, and norms $\nrm{xy}h{\cdot}$ etc.

\subsection{Statement of the approximation result}
Recall that $r>0$ and $\brho>0$ are fixed parameters.
\begin{dfn}
  An \emph{analytic solution $\sol=(v,w,\cvx,\cvy)$ to the classical 
Gauss-Codazzi system on $\xcdom h$}
  consists four functions $v,w,\cvx,\cvy\in\xanl h$
  that are globally bounded on $\xcdom h$,
  are continuously differentiable with respect to $\eta$,
  and satisfy the equations~\eqref{eq:gc1}--\eqref{eq:gc3} on $\xcdom h$.

  An \emph{analytic solution $\sol^\eps=(v^\eps,w^\eps,\cvx^\eps,\cvy^\eps)$ 
to the $\eps$-discrete Gauss-Codazzi system on $\xcdom h$} 
  consists of four functions $v^\eps,w^\eps\in\anl{xy}h$, 
$\cvx^\eps\in\anl{xxy}h$, $\cvy^\eps\in\anl{xyy}h$
  that satisfy the equations~\eqref{eq:gd1}--\eqref{eq:gd3} on $\cdom{xxyy}h$.
\end{dfn}
%
%
A suitable norm to measure the deviation of an $\eps$-discrete solution 
$\sol^\eps$ to a classical solution $\sol$
on the same domain $\xcdom h$ is given by the norms of the differences of the 
four components,
\begin{align*}
  \tnrm h{\sol^\eps-\sol} = 
  \max\left(\nrm{xy}h{v^\eps-v}, \nrm{xy}h{w^\eps-w}, \nrm{xxy}h{\cvx^\eps-\cvx}, \nrm{xyy}h{\cvy^\eps-\cvy}\right).
\end{align*}
%
%
\begin{prp}
  \label{prp:main}
  Let an analytic solution $\sol$ to the Gauss-Codazzi system on 
$\xcdom {\bar h}$ be given,
  and consider a family $(\sol^\eps)_{\eps>0}$ of (a priori not 
necessarily analytic) solutions $\sol^\eps=(v^\eps,w^\eps,\cvx^\eps,\cvy^\eps)$
  to the $\eps$-discrete Gauss-Codazzi equations on $\dom r{h^\eps}$.
  Then there are numbers $A,B>0$ and $\bar\eps>0$ 
  such that the following is true for all $\eps\in(0,\bar\eps)$:
  if $\sol^\eps$ possesses sufficient regularity to admit $\xi$-analytic 
complex extensions for $\eta$ near zero 
  such that
  \begin{align}
    \label{eq:A}
    \tnrm {\eps}{\sol-\sol^\eps} < A\eps,
  \end{align}
  then $\sol^\eps$ as a whole extends to an analytic solution $\sol^\eps$ 
  of the $\eps$-discrete Gauss-Codazzi system on $\xcdom {h^\eps}$,
  and 
  \begin{align}
    \label{eq:B}
    \tnrm {h^\eps}{\sol-\sol^\eps} \le B\eps.
  \end{align}
\end{prp}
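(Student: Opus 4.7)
The plan is to compare the discrete and continuous solutions by a Nagumo--Nirenberg--type iteration in the analytic scale of Banach spaces built from the norms $\xnrm{h}{\cdot}$ and $\tnrm{h}{\cdot}$, using the fact that the system~\eqref{eq:gd1}--\eqref{eq:gd3} is explicit in the $\eta$-direction so that each discrete step $\eta\to\eta+\eps/2$ produces the unknown quantities $\tp_\eta v^\eps,\tp_\eta w^\eps,\tp_y\cvx^\eps,\tp_x\cvy^\eps$ as explicit (analytic) functions of $\tp\sol^\eps$. I would first reformulate the proposition as a maximal-interval statement: set
\[
h_\star=\sup\bigl\{h\in[\eps,h^\eps]\,;\,\sol^\eps\text{ extends analytically to }\xcdom{h}\text{ and }\tnrm{h}{\sol-\sol^\eps}\le B\eps\bigr\},
\]
and aim to prove that $h_\star=h^\eps$ by a bootstrap, with $B$ chosen later. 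The initial hypothesis~\eqref{eq:A} ensures that $h_\star>\eps/2$, since the $\xi$-analytic extension of $\sol^\eps$ can be propagated one discrete step using the explicit evolution and Lemma~\ref{lem:construct}.

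Next, I would derive the evolution equation satisfied by the difference $\Delta^\eps=\sol^\eps-\sol$. Applying the discrete operator $\delta_\eta$ (and $\delta_x,\delta_y$) to $\sol$ differs from applying $\partial_\eta$ (and $\partial_x,\partial_y$) by a consistency error of order $\eps$ (for $\delta_\eta$ it is $\eps^2$ on smooth data, but the right-hand side of~\eqref{eq:gd1a}--\eqref{eq:gd3} still carries explicit $\eps h^\eps_j$ and $\eps^2 h^\eps_j$ terms). Using the asymptotic analyticity of the $h^\eps_j$ from Definition~\ref{dfn:aa} and the global boundedness of $\sol$ on the fattened domain $\xcdom{\bar h}$, I would show that plugging $\sol$ into~\eqref{eq:gd1}--\eqref{eq:gd3} produces a residual bounded by $C_0\eps$ in the semi-norms $\snrm{xy}{\eta}{\rho}{\cdot}$ uniformly for $(\eta,\rho)$ in the parameter range of~\eqref{eq:domnorm}. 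The equation for $\Delta^\eps$ then reads, schematically,
\[
\delta_\eta\Delta^\eps=L^\eps(\Delta^\eps,\delta_\xi\Delta^\eps)+\eps R^\eps,
\]
where $L^\eps$ depends linearly on its arguments with coefficients bounded uniformly in $\eps$ (this uses~\eqref{eq:A} to keep $\sol^\eps$ in the domain of analyticity of the $h^\eps_j$), and $\|R^\eps\|\le C_1$ in the semi-norms.

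The core step is then the analytic Cauchy estimate. Discretely integrating $\delta_\eta\Delta^\eps$ from $0$ to $\eta$ (summing over the $O(\eta/\eps)$ intermediate half-steps) and using the Cauchy inequality~\eqref{eq:cauchy} in the complex $\xi$-variable to control $\delta_\xi\Delta^\eps$ by $\snrm{xy}{\eta}{\rho+\sigma}{\Delta^\eps}/\sigma$ (modulo an $O(\eps^2)$ error that expresses the discrete difference quotient in terms of the analytic derivative), I would obtain an inequality of the form
\[
\snrm{xy}{\eta}{\rho}{\Delta^\eps(\eta)}\le\snrm{xy}{0}{\rho}{\Delta^\eps(0)}+\int_0^\eta\Bigl(C_2\snrm{xy}{\eta'}{\rho}{\Delta^\eps(\eta')}+\frac{C_2}{\sigma(\eta')}\snrm{xy}{\eta'}{\rho+\sigma(\eta')}{\Delta^\eps(\eta')}\Bigr)\dd\eta'+C_1\eps\,\eta
\]
for any suitably chosen slack $\sigma(\eta')>0$. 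The crucial point is that the weight
$\wgt(\eta,\rho)=1-(|\eta|/h)/(1-\rho/\brho)$
in the definition of $\xnrm{h}{\cdot}$ is exactly the Nirenberg weight that makes the inequality above absorbable: choosing the slack $\sigma$ proportional to $\bigl(1-\rho/\brho-|\eta|/h\bigr)$ and multiplying through by $\wgt(\eta,\rho)$, the $\xi$-derivative term is dominated by a fixed fraction of $\tnrm{h}{\Delta^\eps}$ uniformly in $(\eta,\rho)$. A Gronwall argument in $\eta$ then yields $\tnrm{h}{\Delta^\eps}\le(\tnrm{\eps}{\Delta^\eps(\cdot,0)}+C_3\eps h)\exp(C_4h)$, which, choosing $B$ large enough relative to $A,C_1,C_3,C_4$ and $\eps$ small enough, contradicts $h_\star<h^\eps$ via the standard open/closed bootstrap: the strict inequality $\tnrm{h_\star}{\Delta^\eps}<B\eps$ persists on a slightly larger domain, giving $h_\star=h^\eps$.

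The main obstacle, I expect, is the bookkeeping around the discrete analogue of the analytic Cauchy estimate: one must show that, on a function that is $\xi$-analytic in a complex neighbourhood of width $\rho+\sigma$, the finite difference $\delta_\xi$ is controlled by $\sigma^{-1}$ times the sup-norm on that larger neighbourhood, with an $O(\eps^2)$ correction relative to the classical bound, and that the various shifts $\tp_x,\tp_y,\tp_\xi,\tp_\eta$ appearing in~\eqref{eq:gd1}--\eqref{eq:gd3} only cost a controllable reduction of the $\rho$- and $\eta$-parameters. Once this discrete Cauchy calculus is in place, the Nirenberg weight $\wgt$ does all the work and the stability estimate~\eqref{eq:B} follows from the linear source term of size $O(\eps)$ in the evolution equation for $\Delta^\eps$.
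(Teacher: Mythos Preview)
Your proposal is essentially the paper's own proof, dressed in slightly different language. The paper splits the argument into a \emph{consistency} lemma (plugging the smooth solution $\sol$ into the discrete system~\eqref{eq:gd1}--\eqref{eq:gd3} yields a residual uniformly of size $O(\eps)$) and a \emph{stability} argument, and the stability part is carried out as a discrete induction on the number $n$ of half-steps in $\eta$ rather than as a continuous bootstrap on $h_\star$. In each inductive step the paper writes $\Delta v^\eps(\cdot,\eta^*)$ etc.\ as a telescoping sum over the preceding discrete layers, applies the Cauchy estimate~\eqref{eq:cauchy} to the $\delta_\xi$-terms, chooses the intermediate radii exactly as you suggest (proportional to $1-\eta/h-\rho/\brho$), and bounds the resulting sum by the corresponding integral via convexity. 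The outcome is an inequality of the form $\tnrm{(n+1)\epss}{\Delta^\eps}\le\bigl(A+c(h)B+Gh\bigr)\eps$ with $c(h)\to 0$ as $h\to 0$, which closes the induction once $h$ is small enough and $B>A$ is fixed.

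One small correction to your write-up: the final step is not really a Gronwall argument producing a factor $\exp(C_4h)$ with $h$ arbitrary. After the Nirenberg weight has done its work, the $\xi$-derivative contribution is bounded by a multiple of $\tnrm{h}{\Delta^\eps}$ with coefficient $\sim h/\brho$, and this term must be \emph{absorbed into the left-hand side}; that is what forces the smallness condition on $h$ (independently of $\eps$). Your displayed inequality $\tnrm{h}{\Delta^\eps}\le(\tnrm{\eps}{\Delta^\eps}+C_3\eps h)\exp(C_4h)$ hides this absorption and, read literally, would suggest that the estimate holds for all $h\le\bar h$, which it does not. Apart from this presentational point, your identification of the mechanism --- consistency residual of order $\eps$, Cauchy estimate in the complex $\xi$-strip, Nirenberg weight $\wgt$ to swallow the derivative loss, and a layer-by-layer propagation --- matches the paper exactly, including your anticipation that the shifts $\tp_x,\tp_y$ and the staggered domains $\ndom{xy}{h},\ndom{xxy}{h},\ndom{xyy}{h}$ are the main bookkeeping nuisance.
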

\begin{rmk}
  The formulation of the proposition suggests that the height $h$ of the 
domain on which convergence takes place is small.
  However, this is misleading in general.
  As it turns out in the proof, the limitation for $h$ is mostly determined 
by the value of $\brho$,
  which essentially measures the degree of analyticity of the solution $\sol$.
  In many examples of interest, $\brho$ is large compared to the region of 
interest (determined by $\bar h$ and $r$),
  and consequently, one has $h^\eps=\bar h$ above, i.e., convergence takes 
place 
on the entire domain of definition of $\sol$.
\end{rmk}
%
%
The rest of this section is devoted to the proof of Proposition \ref{prp:main}.

\subsection{Consistency}\label{subsec:consis}
We start with an evaluation of the difference between the classical and the 
$\eps$-discrete Gauss-Codazzi equations.
Here, we need yet another measure for the deviation of $\sol^\eps$ from $\sol$:
\begin{align*}
  \rnrm h\delta{\sol^\eps-\sol} = 
  \max\left(\dnrm{xy}h\delta{v^\eps-v}, \dnrm{xy}h\delta{w^\eps-w}, 
\dnrm{xxy}h\delta{\cvx^\eps-\cvx}, \dnrm{xyy}h\delta{\cvy^\eps-\cvy}\right).
\end{align*}
This semi-norm is similar to $\tnrm{h}{\sol^\eps-\sol}$.
For further reference, we note that
\begin{align}
  \label{eq:thenorms2}
  \rnrm{h}\delta{\sol^\eps-\sol} \le \frac1\delta\tnrm h{\sol^\eps-\sol},
\end{align}
thanks to~\eqref{eq:thenorms}, provided that $\delta>0$.
%
Furthermore, we denote for abbreviation the difference between 
corresponding discrete and continuous quantities by $\Delta$, i.e.\ $\Delta 
v^\eps=v^\eps-v$ etc.
\begin{lem}
  \label{lem:ghest}
  Let an analytic solution $\sol$ to the classical Gauss-Codazzi system
  and an analytic solution $\sol^\eps$ to the $\eps$-discrete Gauss-Codazzi 
system be given, both on $\xcdom h$.
  Define the residuals $\tilde g^\eps_1,\ldots,\tilde g^\eps_4\in\anl{xxyy}h$ by
  \begin{align}
    \label{eq:diff1}
    \delta_\eta \Delta v^\eps 
    &= \delta_\xi \Delta w^\eps + \eps\tilde{g}^\eps_1 \\
    \label{eq:diff1a}
    \delta_\eta \Delta w^\eps 
    &= - \delta_\xi \Delta v^\eps + \atp_y\cvx^\eps \atp_x \Delta\cvy^\eps 
+ \atp_y \Delta \cvx^\eps \atp_x \cvy  + \eps\tilde g^\eps_2 \\
    \delta_y \Delta\cvx^\eps 
    &= (\atp_x\cvy^\eps)\,(\atp_\eta \Delta w^\eps-\tp_\xi \Delta v^\eps) 
      + (\atp_x\Delta\cvy^\eps)\,(\atp_\eta w-\tp_\xi v) + \eps\tilde g^\eps_3 \\
    \label{eq:diff3}
    \delta_x \Delta \cvy^\eps 
    &= (\atp_y\cvx^\eps)\,(\atp_\eta \Delta w^\eps+\atp_\xi \Delta v^\eps) 
      + (\atp_y\Delta \cvx^\eps)\, (\atp_\eta w+\atp_\xi v) +\eps\tilde 
g^\eps_4 .
  \end{align}
  Then the $\tilde g^\eps_j$ are uniformly bounded with respect to $\eps<\bar\eps$ 
  on their respective domains:
  \begin{equation}
   \label{eq:ghest}
   |\tilde g^\eps_j| \le G  \quad \text{on $\cdom{xxyy}h$, for each 
$j=1,\ldots,4$},
 \end{equation}
 with a suitable constant $G$ that depends on $\sol$, 
 and $\sol^\eps$ only via $\rnrm h{0}{\sol^\eps-\sol}$,
 but is independent of $\eps$.
\end{lem}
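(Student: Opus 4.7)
My plan is to derive each of \eqref{eq:diff1}--\eqref{eq:diff3} by directly subtracting the corresponding classical Gauss--Codazzi equation from the $\eps$-discrete one (evaluated on $\sol^\eps$), then algebraically rearranging so that the explicit terms displayed in the lemma emerge on the right-hand side and everything else is absorbed into $\eps\tilde g^\eps_j$. The bound $|\tilde g^\eps_j|\le G$ then amounts to showing that this leftover is $O(\eps)$, with constants depending on $\sol$ and on $\rnrm h0{\sol^\eps-\sol}$ but not on $\eps$.

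The leftover has three distinct flavors. First, central-difference discretization errors of the classical solution, such as $\delta_\xi w-w_\xi$ and $\delta_\eta v-v_\eta$: by Taylor expansion these are $O(\eps^2)$ once uniform bounds on third $\xi$-derivatives of $v,w,\cvx,\cvy$ are in hand, and such bounds follow from the Cauchy estimate~\eqref{eq:cauchy} applied on the analytic fattening $\xcdom{\bar h}$, with constants depending only on $\sol$ and on $\brho$. Second, shift-operator errors in purely classical products, typified by
\[(\atp_y\cvx)(\atp_x\cvy)-\cvx\cvy = O(\eps),\]
which are controlled analogously using first derivatives of $\cvx$ and $\cvy$. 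Third, the asymptotically analytic residuals $h^\eps_j(\tp\sol^\eps)$ from Proposition~\ref{prp:gd}: by Definition~\ref{dfn:aa} these are uniformly bounded on any fixed polydisc $\disk_M^8$, and the hypothesis that $\rnrm h0{\sol^\eps-\sol}$ is bounded together with the a priori bound on $\sol$ guarantees that the argument tuple $\tp\sol^\eps$ stays inside $\disk_M^8$ for some $M$ depending only on those two data. The resulting contribution to $\eps\tilde g^\eps_j$ is therefore of the required order (and the prefactor $\eps^2$ in front of $h^\eps_3$ in~\eqref{eq:gd2} makes that term even more benign).

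The bilinear terms in \eqref{eq:gd1a}--\eqref{eq:gd3} are handled by the standard algebraic splitting
\[(\atp_y\cvx^\eps)(\atp_x\cvy^\eps) = (\atp_y\cvx^\eps)(\atp_x\Delta\cvy^\eps) + (\atp_y\Delta\cvx^\eps)(\atp_x\cvy) + (\atp_y\cvx)(\atp_x\cvy),\]
and by analogous splittings for the products $(\atp_x\cvy^\eps)(\atp_\eta w^\eps-\tp_\xi v^\eps)$ and $(\atp_y\cvx^\eps)(\atp_\eta w^\eps+\atp_\xi v^\eps)$ appearing in \eqref{eq:gd2}--\eqref{eq:gd3}. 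The first two summands in each splitting match the linearized terms displayed explicitly on the right of \eqref{eq:diff1a}--\eqref{eq:diff3}, while the third summand is purely classical and falls under flavor two above.

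The main obstacle is bookkeeping rather than analysis. For each of the four equations one must carefully track which shifted combination of $v,w,\cvx,\cvy,v^\eps,w^\eps,\cvx^\eps,\cvy^\eps$ lives at which lattice site, and apply the bilinear splittings so that the lemma's right-hand side is reproduced verbatim. A second, more conceptual, point to verify is that no derivatives of $\sol^\eps$ ever appear inside $\tilde g^\eps_j$: only $\Delta$-differences (which get absorbed into the explicit terms) and bounded evaluations $h^\eps_j(\tp\sol^\eps)$. This ensures that $G$ depends on $\sol^\eps$ only through $\rnrm h0{\sol^\eps-\sol}$, which is essential for the bootstrap argument of Proposition~\ref{prp:main}.
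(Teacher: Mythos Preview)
Your proposal is correct and follows essentially the same route as the paper: rewrite the classical Gauss--Codazzi equations in finite-difference form with $O(\eps)$ residuals $g_j^\eps$ (your flavors one and two combined), subtract the discrete system~\eqref{eq:gd1}--\eqref{eq:gd3}, apply exactly the bilinear splitting you describe, and identify $\tilde g^\eps_j = h^\eps_j(\tp\sol^\eps) - g^\eps_j$; the bound then follows from asymptotic analyticity of the $h^\eps_j$ via the polydisc argument you outline. The paper's proof is more terse---it does not spell out the Cauchy-estimate or polydisc steps explicitly---but the structure is identical.
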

\begin{proof}
  By analyticity of $\sol$ it is clear that the central difference quotients obey
  \[ \delta_\xi v=\partial_\xi v+\eps g^\eps_{v,\xi},\quad 
\delta_\eta v=\partial_\eta v+\eps g^\eps_{v,\eta}\quad \text{etc.} \]
  with functions $g_{v,\xi}^\eps,g_{v,\eta}^\eps,\ldots\in\anl{xxyy}h$ that are 
bounded uniformly w.r.t. $\eps$.
  The classical Gauss-Codazzi system~\eqref{eq:gc1}--\eqref{eq:gc3} thus 
implies that
  \begin{align*}
    \delta_\eta v &= \delta_\xi w + \eps g_1^\eps, \\
    \delta_\eta w &= - \delta_\xi v - (\atp_y\cvx)(\atp_x\cvy) + \eps g_2^\eps, \\
    \delta_y\cvx &= (\atp_x\cvy)(\atp_\eta w-\tp_\xi v) + \eps g_3^\eps, \\
    \delta_x\cvy &= (\atp_y\cvx)(\atp_\eta w+\atp_\xi v) + \eps g_4^\eps,
  \end{align*}
  where each of the functions $g_j^\eps$ is bounded on $\ndom{xxyy}h$, 
with an $\eps$-independent bound.
  Taking the difference between each equation of this system 
  and the the respective equation of the $\eps$-discrete Gauss-Codazzi 
equation~\eqref{eq:gd1}--\eqref{eq:gd3}
  yields~\eqref{eq:diff1}--\eqref{eq:diff3}, with
  \begin{align*}
    \tilde g^\eps_j = h^\eps_j(\tp\sol^\eps) - g^\eps_j .
  \end{align*}
  Since the $h^\eps_j$ are asymptotically analytic on $\setC^8$,
  it follows that the modulus of $h^\eps_j(\tp\sol^\eps)$ is unifomly controlled on $\ndom{xxyy}h$ 
  by the supremum of the modulus of $(\sol^\eps)$'s components.
\end{proof}

\subsection{Stability}\label{subsec:stab}
Stability is shown inductively.
More precisely, we prove for each $n=1,2,\ldots$ with $n\epss\le h$ that
\begin{align}
  \label{eq:induct}
  \tnrm {n\epss}{\sol^\eps-\sol} < B\eps.
\end{align}
In fact, there is nothing to show for $n=1$.
For $n=2$, the claim~\eqref{eq:induct} is a consequence of 
estimate~\eqref{eq:A} on the initial data.
Now assume that~\eqref{eq:induct} has been shown for some $n\ge2$.
We are going to extend the estimate to $n+1$.
\medskip

\emph{Estimate on $\Delta v^\eps$.}
We begin by proving the estimate for the $v$-component of $\Delta\sol^\eps$.
Since $v^\eps$ is defined on $\cdom{xy}h$, the step $n\to n+1$ requires to estimate
the values of $\Delta v^\eps(\cdot,\eta^*)$ for $\eta^*\in ((n-1)\epss,n\epss]$.
Choose such an $\eta^*$, 
and define accordingly $\ell$ such that $\eta^*_0:=\eta^*-\ell\eps\in(-\epss,\epss]$;
in fact, $2\ell=n$ if $n$ is even, and $n=2\ell+1$ if $n$ is odd.
For $0\le k\le2\ell$, introduce
\begin{equation}\label{eq:etastep} \eta^*_k = \eta^* - (2\ell-k)\epss; \end{equation}
non-integer values of $k$ are admitted.
\eqref{eq:etastep} is consistent with the definition of $\eta^*_0$, and 
moreover, $\eta^*=\eta^*_{2\ell}$.
Using the evolution equation~\eqref{eq:diff1}, we obtain
\begin{align}
  \nonumber
  \Delta v^\eps(\cdot,\eta^*) 
  &= \Delta v^\eps(\cdot,\eta^*_0) + 
\sum_{k=1}^\ell\big(\tp_\eta\Delta v^\eps-\atp_\eta\Delta 
v^\eps\big)(\cdot,\eta^*_{2k-1}) \\
  \label{eq:vprec}
  &= \Delta v^\eps(\cdot,\eta^*_0) + \eps\sum_{k=1}^\ell \delta_\xi\Delta w^\eps(\cdot,\eta^*_{2k-1}) 
  + \eps^2\sum_{k=1}^\ell\tilde g_1^\eps(\cdot,\eta^*_{2k-1}).
\end{align}
Next, pick a $\rho^*>0$ such that
\begin{align}
  \label{eq:rhostar}
  \frac{\rho^*}{\brho} + \frac{\eta^*}h < 1.
\end{align}
We estimate:
\begin{align}
  \snrm{xy}{\eta^*}{\rho^*}{\Delta v^\eps}
  \le &\snrm{xy}{\eta^*_0}{\rho^*}{\Delta v^\eps} 
  + \eps\sum_{k=1}^\ell\snrm{xy\xi}{\eta^*_{2k-1}}{\rho^*}{\delta_\xi\Delta w^\eps}
  + \eps^2\sum_{k=1}^\ell \snrm{xxyy}{\eta^*_{2k-1}}{\rho^*}{\tilde g_1^\eps} 
=: \mathrm{(I)} + \mathrm{(II)} + \mathrm{(III)}.
  \label{eq:3sisters}
\end{align}
We consider the terms $\mathrm{(I)-(III)}$ separately.
First, thanks to our hypothesis~\eqref{eq:A} on the initial conditions,
we find that
\begin{align*}
  \mathrm{(I)} = \snrm{xy}{\eta^*_0}{\rho^*}{\Delta v^\eps} \le \nrm{xy}\eps{\Delta v^\eps} \le A\eps.
\end{align*}
Second, recalling the definition of $\nrm{xy}h{\cdot}$, and using a Cauchy 
estimate~\eqref{eq:cauchy},
we obtain for given $\rho^*_{2k-1}>\rho^*$ --- yet to be determined ---
\begin{align*}
  \mathrm{(II)}
  & =\eps\sum_{k=1}^\ell\snrm{xy\xi}{\eta^*_{2k-1}}{\rho^*}{\delta_\xi\Delta w^\eps} 
  \le \eps\sum_{k=1}^\ell\snrm{xy}{\eta^*_{2k-1}}{\rho^*}{\partial_\xi\Delta w^\eps}
  \le \eps\sum_{k=1}^\ell\frac{\snrm{xy}{\eta^*_{2k-1}}{\rho^*_{2k-1}}{\Delta w^\eps}}{\rho^*_{2k-1}-\rho^*}\\
  &\le \eps 
\left(\sum_{k=1}^\ell\frac1{(\rho^*_{2k-1}-\rho^*)\wgt(\eta^*_{2k-1},\rho^*_ { 
2k-1})}\right)\,\nrm{xy}{n\epss}{\Delta w^\eps}.
\end{align*}
We make the particular choice
\begin{align*}
  \rho^*_{2k-1} :=  \frac{\brho}2\left(1-\frac{\eta^*_{2k-1}}h+\frac{\rho^*}{\brho}\right),
\end{align*}
which yields that
\begin{align*}
  \rho^*_{2k-1}-\rho^* &=  \frac{\brho}2\left(1-\frac{\eta^*_{2k-1}}h-\frac{\rho^*}{\brho}\right), \\
  \wgt(\eta^*_{2k-1},\rho^*_{2k-1}) 
  &= \frac{1-\frac{\eta^*_{2k-1}}h-\frac{\rho^*_{2k-1}}{\brho}}{1-\frac{\rho^*_{2k-1}}{\brho}}
  = \frac{1-\frac{\eta^*_{2k-1}}h-\frac{\rho^*}{\brho}}{1+\frac{\eta^*_{2k-1}}h-\frac{\rho^*}{\brho}}.
\end{align*}
And so we obtain
\begin{align*}
  \mathrm{(II)}
  &\le \frac2{\brho}\left(1+\frac{\eta^*}h-\frac{\rho^*}{\brho}\right)
  \left(\eps\sum_{k=1}^\ell\left(1-\frac{\eta^*_{2k-1}}h-\frac{\rho^*}{\brho}\right)^{-2}\right) \,\nrm{xy}{n\epss}{\Delta w^\eps}.
\end{align*}
To estimate the sum above,
define $\varphi:(\eta^*_0,\eta^*)\to\setR$ by
\begin{align*}
  \varphi(\eta) = \left(1-\frac{\eta}h-\frac{\rho^*}{\brho}\right)^{-2}.
\end{align*}
Since $\varphi$ is a convex function, Jensen's inequality implies that
\begin{align*}
  \int_{\eta^*_{2k-2}}^{\eta^*_{2k}}\varphi(\eta)\dd\eta 
  \ge (\eta^*_{2k}-\eta^*_{2k-2})\varphi\left(\frac1{\eta^*_{2k}-\eta^*_{2k-2}}\int_{\eta^*_{2k-2}}^{\eta^*_{2k}}\eta\dd\eta\right)
  = \eps\varphi(\eta^*_{2k-1}).
\end{align*}
Hence the sum is bounded from above by the respective integral,
\begin{align*}
  \eps\sum_{k=1}^\ell\left(1-\frac{\eta^*_{2k-1}}h-\frac{\rho^*}{\brho}\right)^{-2}
  &\le \int_{\eta^*_0}^{\eta^*_{2\ell}}\left(1-\frac{\eta}h-\frac{\rho^*}{\brho}\right)^{-2}\dd\eta \\
  &\le h\,\left(1-\frac{\eta^*}h-\frac{\rho^*}{\brho}\right)^{-1}
  = \frac{h}{\wgt(\eta^*,\rho^*)\,(1-\rho^*/\brho)}.
\end{align*}
The last term $\mathrm{(III)}$ is estimated with the help of the 
bound~\eqref{eq:ghest}.
However, there is a subtlety: a priori, 
the constant $G$ there is controlled in terms of $\rnrm{n\epss}0{\sol^\eps-\sol}$,
but the induction estimate~\eqref{eq:induct} is not sufficient to provide such 
a uniform bound, due to the weight~$\wgt$.
Fortunately, a close inspection of the terms in~$\mathrm{(III)}$ reveals in 
combination with~\eqref{eq:rhostar} that
we only need bounds on $|\tilde g^\eps_j|_{\eta,\rho}$ where $\rho/\brho<1-\eta^*/h-\epss/h$.
It is easily deduced from Lemma~\ref{lem:ghest} 
that an $\eps$-uniform estimate on $\rnrm{n\epss}{\delta}{\sol^\eps-\sol}$ 
with $\delta:=\brho/h\epss>0$ suffices in this case, 
and the latter is obtained by combining~\eqref{eq:induct} 
with~\eqref{eq:thenorms2}.
Enlarging $G$ if necessary, we arrive at
\begin{align*}
  \mathrm{(III)} \le \eps^2\sum_{k=1}^\ell G = (\eps \ell)\eps G \le G h \eps.
\end{align*}
After multiplication of~\eqref{eq:3sisters} by $\wgt(\eta^*,\rho^*)\le1$, 
we arrive at
\begin{align}
  \label{eq:help022}
  \wgt(\eta^*,\rho^*)\snrm{xy}{\eta^*}{\rho^*}{\Delta v^\eps}
  \le A\eps + \frac{2h}{\brho}\frac{1+\eta^*/h-\rho^*/\brho}{1-\rho^*/\brho} 
\nrm{xy}{n\epss}{\Delta w^\eps}+ G h \eps
  \le \left( A + \frac{4h}{\brho}B + Gh \right)\eps,
\end{align}
where we have used the induction hypothesis~\eqref{eq:induct} for estimation of 
$\nrm{xy}{n\epss}{\Delta w^\eps}$,
and the relation~\eqref{eq:rhostar} for estimation of the quotient.
We have just proven inequality~\eqref{eq:help022} 
for every $\eta^*\in((n-1)\epss,n\epss]$, and for every $\rho^*\ge0$ that 
satisfies~\eqref{eq:rhostar}.
Taking the supremum with respect to these quantities yields
\begin{align}
  \label{eq:vresult}
  \nrm{xy}{(n+1)\epss}{\Delta v^\eps}
  \le \left( A + \frac{4h}{\brho}B + Gh \right)\eps.
\end{align}
\medskip

\emph{Estimate on $\Delta w^\eps$.}
For estimation of the $w$-component, let $\eta^*\in((n-1)\epss,n\epss]$ be 
given as before, and define $\eta^*_k$ as in~\eqref{eq:etastep}.
In analogy to~\eqref{eq:vprec}, 
we have 
\begin{align*}
    \Delta w^\eps(\cdot,\eta^*) 
    &= \Delta w^\eps(\cdot,\eta^*_0) + \eps\sum_{k=1}^\ell \delta_\xi\Delta v^\eps(\cdot,\eta^*_{2k-1}) 
    + \eps^2\sum_{k=1}^\ell\tilde g_2^\eps(\cdot,\eta^*_{2k-1}) \\
    & + \eps\sum_{k=1}^\ell\big(\atp_y\cvx^\eps\,\atp_x\Delta\cvy^\eps\big)(\cdot,\eta^*_{2k-1})
    + \eps\sum_{k=1}^\ell\big(\atp_y\Delta\cvx^\eps\,\atp_x\cvy\big)(\cdot,\eta^*_{2k-1}).
\end{align*}
Taking the $\snrm{xy}{\eta^*}{\rho^*}{\cdot}$-norm on both sides,
multiplying by $\wgt(\eta^*,\rho^*)<1$,
and estimating the first couple of terms as above, we find that
\begin{equation}
  \label{eq:bigw}
  \begin{split}
  \wgt(\eta^*,\rho^*)\snrm{xy}{\eta^*}{\rho^*}{\Delta w^\eps}
  &\le \left( A + \frac{4h}{\brho}B + Gh \right)\eps \\
  & + \eps\sum_{k=1}^\ell \left(
    \snrm{xxy}{\eta^*_{2k-\frac32}}{\rho^*}{\cvx^\eps}\,
\wgt(\eta^*,\rho^*)\snrm{xyy}{\eta^*_{2k-\frac32}}{\rho^*}{\Delta\cvy^\eps}
    +\snrm{xyy}{\eta^*_{2k-\frac32}}{\rho^*}{\cvy}\,
\wgt(\eta^*,\rho^*)\snrm{xxy}{\eta^*_{2k-\frac32}}{\rho^*}{\Delta\cvx^\eps} 
    \right)\\
  &\le \left( A + \frac{4h}{\brho}B + Gh \right)\eps
  + \eps\sum_{k=1}^\ell \left(\snrm{xxy}{\eta^*_{2k-\frac32}}{\rho^*}{\cvx^\eps}\,\nrm{xxy}{n\epss}{\Delta\cvy^\eps}
    +\snrm{xyy}{\eta^*_{2k-\frac32}}{\rho^*}{\cvy}\,\nrm{xxy}{n\epss}{\Delta\cvx^\eps}\right).
  \end{split}
\end{equation}
On the one hand, the analytic solution $\sol$ is bounded on $\xcdom{\bar h}$,
and so
\begin{align}
  \label{eq:bigtheta}
  \nrm{xxy}{n\epss}{\cvx} \le \solbd:=\sup_{\xcdom{\bar h}}|\sol|.
\end{align}
On the other hand, since $\eta^*_{2k-\frac32}\le\eta^*-\frac34\eps$, and 
because of~\eqref{eq:rhostar}, we have that
\begin{align*}
  \wgt(\eta^*_{2k-\frac32},\rho^*) = \frac{1-\eta^*_{2k-\frac32}/h-\rho^*/\brho}{1-\rho^*/\brho} \ge \frac34\frac{\eps}h,
\end{align*}
and therefore, using the induction hypothesis~\eqref{eq:induct},
\begin{align}
  \label{eq:smalltheta}
  \snrm{xxy}{\eta^*_{2k-\frac32}}{\rho^*}{\cvx^\eps} 
  \le \snrm{xxy}{\eta^*_{2k-\frac32}}{\rho^*}{\cvx} + \snrm{xxy}{\eta^*_{2k-\frac32}}{\rho^*}{\Delta\cvx^\eps}   
  \le  \sup_{\xcdom{\bar h}}|\sol| + \frac{\nrm{xxy}{n\epss}{\Delta\cvx^\eps}}{\wgt(\eta^*_{2k-\frac32},\rho^*)}
  \le \solbd + \frac{B\eps}{(3\eps)/(4h)} = \solbd + \frac43Bh.
\end{align}
The remaining terms $\nrm{xyy}{n\epss}{\Delta\cvx^\eps}$ and 
$\nrm{xyy}{n\epss}{\Delta\cvy^\eps}$ in~\eqref{eq:bigw}
can be estimated directly by~\eqref{eq:induct}.
Substitution of these partial estimates into~\eqref{eq:bigw}, 
and recalling that $\ell\eps\le h$, leads to
\begin{align}
  \label{eq:wresult}
  \wgt(\eta^*,\rho^*)\snrm{xy}{\eta^*}{\rho^*}{\Delta v^\eps}  
  \le \left( A + \left[\frac4{\brho}+2\solbd+\frac43Bh\right]Bh + Gh \right)\eps.
\end{align}
\medskip

\emph{Estimate on $\Delta\cvx^\eps$.}
Finally, let us estimate $\Delta\cvx^\eps(\cdot,\eta^*)$ at some 
$\eta^*\in((n-\frac32)\epss,(n-\frac12)\epss]$.
For the estimates below, let in addition a $\xi^*\in\setC$ be given such 
that $(\xi^*,\eta^*)\in\cdom{xxy}{h}$.
We need to use a slightly different normalization for the $\eta^*_k$ 
in~\eqref{eq:etastep}:
write $\eta^*=\eta^*_{-\frac12}+m\epss$ for suitable 
$\eta^*_{-\frac12}\in(-\epsss,\epsss]$ and a (uniquely determined) $m\in\setN$.
Now define
\[ \xi^*_k=\xi^*+(m-k+\frac12)\epss, \quad \eta^*_k=\eta^*-(m-k+\frac12)\epss; \]
note that $\xi^*=\xi^*_{m-1/2}$ and $\eta^*=\eta^*_{m-\frac12}$.
With these notations:
\begin{align*}
  \Delta\cvx^\eps(\xi^*,\eta^*) 
  &= \Delta\cvx^\eps(\xi^*_{-\frac12},\eta^*_{-\frac12}) 
    + \sum_{k=0}^{m-1}\big(\tp_y\Delta\cvx^\eps-\atp_y\Delta\cvx^\eps\big)(\xi^*_k,\eta^*_k)\\
  &= \Delta\cvx^\eps(\xi^*_{-\frac12},\eta^*_{-\frac12})\\ 
  &+ \eps\sum_{k=0}^{m-1}\cvy^\eps(\xi^*_{k+\frac12},\eta^*_{k-\frac12})\,
    \left(\Delta w^\eps(\xi^*_k,\eta^*_{k-1})-\Delta v^\eps(\xi^*_{k+1},\eta_k)\right) \\
  &+ \eps\sum_{k=0}^{m-1}\Delta\cvy^\eps(\xi^*_{k+\frac12},\eta^*_{k-\frac12})\,
    \left(w(\xi^*_k,\eta^*_{k-1})-v(\xi^*_{k+1},\eta^*_k)\right)\\
&\ +\eps^2\sum_{k=0}^{m-1}\tilde g_3^\eps (\xi^*_k,\eta^*_{k}).
\end{align*}
It is straight-forward to verify that all the terms on the right-hand side 
are well-defined for the given arguments.
For a given $\rho^*$ that satisfies~\eqref{eq:rhostar},
we apply the semi-norm $\snrm{xxy}{\eta^*}{\rho^*}{\cdot}$ to both sides
and estimate further, using the triangle inequality:
\begin{align}
  \label{eq:help008}
  \begin{split}
    \snrm{xxy}{\eta^*}{\rho^*}{\Delta\cvx^\eps}
    &\le \snrm{xxy}{\eta^*_{-\frac12}}{\rho^*}{\Delta\cvx^\eps} \\
    &+ \eps\sum_{k=0}^{m-1}\snrm{xxy}{\eta^*_{k-\frac12}}{\rho^*}{\cvy^\eps}
    \left(\snrm{xy}{\eta_{k-1}}{\rho^*}{\Delta w^\eps}+\snrm{xy}{\eta_k}{\rho^*}{\Delta v^\eps}\right) \\
    &+ \eps\sum_{k=0}^{m-1}\snrm{xxy}{\eta^*_{k-\frac12}}{\rho^*}{\Delta\cvy^\eps}
    \left(\snrm{xy}{\eta_{k-1}}{\rho^*}{w}
+\snrm{xy}{\eta^*_k}{\rho^*}{v}\right)\\
&\ +\eps^2\sum_{k=0}^{m-1}\snrm{xy}{\eta^*_k}{\rho^*}{\tilde g_3^\eps}..
  \end{split}
\end{align}
On the one hand, we have that
\begin{align*}
  \snrm{xy}{\eta_{k-1}}{\rho^*}{w}+\snrm{xy}{\eta_k}{\rho^*}{v}
  \le \sup_{\xcdom{\bar h}}|w| + \sup_{\xcdom{\bar h}}|v| \le 2\solbd,
\end{align*}
with the bound $\solbd$ from~\eqref{eq:bigtheta}.
And on the other hand, arguing like in~\eqref{eq:smalltheta} 
on grounds of $\eta^*_{k-\frac12}\le\eta^*-\frac34\eps$ for all $k=0,\ldots,m-1$,
we have the estimate
\begin{align*}
  \snrm{xxy}{\eta^*_{k-\frac12}}{\rho^*}{\cvy^\eps}
  \le \snrm{xxy}{\eta^*_{k-\frac12}}{\rho^*}{\cvy} + \snrm{xxy}{\eta^*_{k-\frac12}}{\rho^*}{\Delta\cvy^\eps}
  \le \solbd + \frac43Bh.
\end{align*}
Substitute this into~\eqref{eq:help008} and multiply by $\wgt(\eta^*,\rho^*)$
to obtain
\begin{align}
  \label{eq:cvxresult}
  \nrm{xxy}{(n+1)\epss}{\Delta\cvx^\eps}
  \le \left(A + 4\left[\solbd+\frac13Bh\right]Bh+Gh\right)\eps.
\end{align}
\medskip

\emph{Estimate on $\Delta\cvy^\eps$.}
This is completely analogous to the estimate for $\Delta\cvx^\eps$ above.
\medskip

Summarizing the results in~\eqref{eq:vresult}, \eqref{eq:wresult} 
and~\eqref{eq:cvxresult},
we obtain~\eqref{eq:induct} with $n+1$ in place of $n$, 
for an arbitrary choice of $B>A$, 
and any corresponding $h>0$ that is sufficiently small 
to make the coefficients in front of $\eps$ in~\eqref{eq:vresult}, 
\eqref{eq:wresult} and~\eqref{eq:cvxresult} smaller than $B$.
Notice that the implied smallness condition on~$h$ is independent of~$\eps$.

\section{The Continuous Limit of Discrete Isothermic Surfaces}\label{sec:convsurf}
We are finally in the position to formulate and prove our main approximation result.

\subsection{From Bj\"orling Data to Cauchy Data and back}
Given analytic Bj\"orling data $(f,n)$ in the sense of Theorem \ref{thm:existence},
first compute the associated frame~$\Psi^0$, the conformal factor~$u^0$, 
and the derived quantities $v^0,w^0,\cvx^0,\cvy^0$ as functions on $(-r,r)$ 
as detailed in the proof there.
We claim that, for any sufficiently small $\eps>0$, 
associated Bj\"orling data $f^\eps:\dom r\epss\to\reals^3$ for construction of an 
$\eps$-discrete isothermic surface 
can be prescribed such that the following are true:
\begin{enumerate}
\item The initial surface piece and its tangent vectors are approximated to first order in $\eps$,
  \begin{align}
    \label{eq:howtostart2}
    \begin{split}
    f^\eps(\xi,\eta) &= f(\xi) + \bo(\eps),\\
    \delta_xf^\eps(\xi,\eta) &= \exp(u^0(\xi))\Psi^0_1(\xi) + \bo(\eps),\\
    \delta_yf^\eps(\xi,\eta) &= \exp(u^0(\xi))\Psi^0_2(\xi) + \bo(\eps),      
    \end{split}
  \end{align}
  where the $\bo(\eps)$ indicate $\eps$-smallness that is uniform in $(\xi,\eta)$ 
  on the domains $\dom r\epss$ for $f^\eps$, 
  and $\ndom{x}\epss$ for $\delta_xf^\eps$, 
  and $\ndom{y}\epss$ for $\delta_yf^\eps$, respectively.
\item The derived quantities $(v^\eps, w^\eps,\cvx^\eps,\cvy^\eps)$ satisfy
  \begin{align}
    \label{eq:howtostart}
    v^\eps(\xi,\eta)=v^0(\xi), \quad
 \cfb^\eps(\xi,\eta)=w^0(\xi), \quad
    \cvx^\eps(\xi,\eta)=\cvx^0(\xi), \quad
    \cvy^\eps(\xi,\eta)=\cvy^0(\xi),
  \end{align}
  at each point $(\xi,\eta)$ in $\ndom{xy}\eps$ for $v^\eps,\cfb^\eps$,
  in $\ndom{xxy}\eps$ for $\cvx^\eps$, and in $\ndom{xyy}\eps$ for $\cvy^\eps$, respectively.
\end{enumerate}
Notice that the data $(v^\eps, \cfb^\eps,\cvx^\eps,\cvy^\eps)$ are $\xi$-analytic quantities;
ironically, one cannot even expect continuity of the respective data $f^\eps$ in general.

For later reference, we briefly sketch one possible construction of such data $f^\eps$.
We start by defining $f^\eps$ on point triples in the strip $-3\epsss<\xi\le3\epsss$:
let $(\xi,\eta)\in\dom r\epss$ be a point with $-\epsss<\xi\le\epsss$.
We distinguish two cases.
If $0<\eta\le\epss$, then we define $f^\eps(\xi,\eta-\epss)=f(0)$,
and there is a unique way to assign data $f^\eps$ 
at the two points $(\xi-\epss,\eta)$ and $(\xi+\epss,\eta)$ 
such that for the vectors 
\[ 
a=\frac1\eps\big(f^\eps(\xi+\epss,\eta)-f^\eps(\xi,\eta-\epss)\big),
\quad
b=\frac1\eps\big(f^\eps(\xi-\epss,\eta)-f^\eps(\xi,\eta-\epss)\big),
\]
the following is true:
\begin{enumerate}
\item $a$ is parallel to $\Psi^0_1(0)$, and $b$ is orthogonal to $n(0)$,
\item $\displaystyle{\eps w^0(\xi) = \frac{\langle a,b\rangle}{\|a\|\|b\|}}$,
\item $\|a\|=\exp\big(u^0(0)+\epss v^0(\xi)\big)$ and $\|b\|=\exp\big(u^0(0)-\epss v^0(\xi)\big)$.
\end{enumerate}
If instead $-\epss<\eta\le0$, then we define $f^\eps (\xi,\eta+\epss)=f(0)$, 
and we assign data $f^\eps$ at $(\xi+\epss,\eta)$ and at $(\xi-\epss,\eta)$
with the respective adaptations for the conditions on the vectors.

Up to here, there has been a certain degree of freedom in the choice of the $f^\eps$.
>From now on, there is a unique way to extend the already prescribed $f^\eps$ to all of $\dom r\epss$
such that~\eqref{eq:howtostart} --- and, incidentally, also 
\eqref{eq:howtostart2} --- holds.
We briefly indicate how to proceed in the next step; 
the further steps are then made inductively in the same way.
Let $(\xi,\eta)$ be a point with $3\epsss<\xi\le5\epsss$, and with $-\epss<\eta\le0$.
Note that $f^\eps$ is already defined at the following points: 
$(\xi-\eps,\eta)$, $(\xi-\epss,\eta+\epss)$ and $(\xi-3\epss,\eta+\epss)$.
Let us introduce the vectors
\[ 
a=\frac1\eps\big(f^\eps(\xi-\epss,\eta+\epss)-f^\eps(\xi-\eps,\eta)\big),
\quad
b=\frac1\eps\big(f^\eps(\xi-3\epss,\eta+\epss)-f^\eps(\xi-\eps,\eta)\big).
\]
Then, there is a unique choice for $f^\eps(\xi,\eta)$ such that the new vector
\[ c=\frac1\eps\big(f^\eps(\xi-\epss,\eta+\epss)-f^\eps(\xi,\eta)\big) \]
satisfies the following conditions:
\begin{enumerate}
\item the $\sin$-value of the angle between the planes spanned by $(a,b)$ and by $(b,c)$, respectively,
  equals to $\eps\cvx^0(\xi-3\epsss)$,
\item $\displaystyle{\frac{\langle a,c\rangle}{\|a\|\|c\|}=\eps w^0(\xi-\epss)}$,
\item $\|c\|=\|b\|\exp(\epss v^0(\xi-\epss))$.
\end{enumerate}
By continuing this construction in an inductive manner,
we enlarge the domain of definition with respect to $\xi$ by $\epss$ in both directions in each step, 
until $f^\eps$ is defined on all of $\dom r\epss$.
It is obvious from the construction that \eqref{eq:howtostart} holds.
The verification of~\eqref{eq:howtostart2} is a tedious but straight-forward 
exercise in elementary geometry
that we leave to the interested reader.
An important point is that the aforementioned construction only uses data that can be obtained 
very directly from the Bj\"orling data $(f,n)$.
Indeed, the calculation of $u^0$, $\Psi^0$ and $(v^0,w^0,\cvx^0,\cvy^0)$ from $(f,n)$
only involves differentiation and inversion of matrices.
In particular, all operations are local.

%
%
\begin{dfn}
  Assume that analytic Bj\"orling data $(f,n)$ and discrete data $f^\eps:\dom r\epss$ are given
  such that \eqref{eq:howtostart2} and~\eqref{eq:howtostart} are satisfied.
  The maximal $\eps$-discrete isothermic surface $F^\eps:\dom r{h_\eps}$
  that is obtained from $f^\eps$ as Bj\"orling data --- see 
Proposition~\ref{prp:dbjorling} --- is referred to as \emph{grown from $(f,n)$}.
\end{dfn}

\subsection{Main result}
The central approximation result is the following.
\begin{thm}
  \label{thm:dsc}
  Let analytic Bj\"orling data $(f,n)$ on $(-r,r)$ be given,
  and let $F:\dom r{\bar h}\to\reals^3$ be the corresponding real-analytic isothermic surface.
  Further, let $F^\eps:\dom r{h^\eps}\to\reals^3$ be the family of $\eps$-discrete isothermic surfaces
  that are grown from $(f,n)$.
  
  Then, there are some $h>0$ and $C>0$, such that for all sufficiently small $\eps>0$,
  we have that $h_\eps\ge h$, 
  and 
  \begin{align}
    \label{eq:theclaim}
    \|F^\eps(\xi,\eta)-F(\xi,\eta)\|\le C\eps, \quad
    \|\delta_xF^\eps(\xi,\eta)-F_x(\xi,\eta)\| \le C\eps,\quad
    \|\delta_yF^\eps(\xi,\eta)-F_y(\xi,\eta)\| \le C\eps,
  \end{align}
  for all $(\xi,\eta)\in\ndom{xy}{h}$.
\end{thm}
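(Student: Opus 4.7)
The plan is to reduce the geometric statement to the abstract convergence in Proposition \ref{prp:main} for the Gauss-Codazzi data, and then to reconstruct the surface from those data in parallel for the smooth and discrete sides.

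Start from the smooth side. Theorem \ref{thm:existence} provides, on some $\dom r{\bar h}$, a unique analytic isothermic surface $F$ together with the associated analytic quadruple $\sol=(v,w,\cvx,\cvy)$ solving the Gauss-Codazzi system \eqref{eq:gc1}--\eqref{eq:gc3}. Choosing $\brho>0$ sufficiently small (and shrinking $\bar h$ if needed), real analyticity in $\xi$ yields a bounded holomorphic extension of $\sol$ to the complex fattening $\xcdom{\bar h}$; the Gauss-Codazzi equations themselves then give boundedness of $\partial_\eta\sol$ there. Thus $\sol$ is an analytic solution on $\xcdom{\bar h}$ in the sense of Proposition \ref{prp:main}.

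On the discrete side, let $F^\eps$ be the $\eps$-discrete isothermic surface grown from $(f,n)$, with associated quadruple $\sol^\eps$. By the construction property \eqref{eq:howtostart}, on the initial strip $|\eta|\le\epss$ each component of $\sol^\eps$ is a function of $\xi$ alone (with $w^\eps$ determined from the pair $(v^\eps,\cfb^\eps)$ by Lemma \ref{lem:confsq}, introducing only an extra $O(\eps^2)$ deviation from $w^0$). Since $(f,n)$ and hence $v^0,w^0,\cvx^0,\cvy^0$ are real analytic, these $\xi$-profiles admit holomorphic extensions to the complex fattening, and a Taylor expansion of $\sol$ around $\eta=0$ shows that $\tnrm{\eps}{\sol-\sol^\eps}\le A'\eps$ with $A'$ independent of $\eps$. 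For $\eps$ small enough we have $A'<A$, so Proposition \ref{prp:main} applies and yields that, for all sufficiently small $\eps$, the discrete solution $\sol^\eps$ extends analytically to $\xcdom{h^\eps}$ with $h^\eps\ge h$ for some $\eps$-independent $h>0$, and
\begin{align*}
  \tnrm{h^\eps}{\sol-\sol^\eps}\le B\eps.
\end{align*}

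It remains to promote this abstract closeness to the geometric bounds \eqref{eq:theclaim}. Restricting to the real domain $\ndom{xy}h$ and, if needed, slightly reducing $h$ so that the weight $\wgt$ stays bounded below on the relevant region, one extracts a uniform sup-norm bound $|\sol^\eps-\sol|\le C\eps$ there. The reconstruction proceeds in three stages, in parallel for the smooth and discrete sides. First, the discrete conformal factors satisfy \eqref{eq:reconu}, while the continuous analog reads $\cf_x=w+v$, $\cf_y=w-v$; with initial values also matching up to $O(\eps)$ by \eqref{eq:howtostart2}, comparison of the Riemann sum with the integral --- using the $O(\eps)$ closeness of summands and an $O(\eps)$ Taylor remainder for the smooth integrand --- gives $|\cfx^\eps-\cf|+|\cfy^\eps-\cf|\le C\eps$. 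Second, the discrete tangents and normal $(\fx^\eps,\fy^\eps,N^\eps)$ satisfy the frame recursions \eqref{eq:recona}--\eqref{eq:reconb} (together with the analogous recursion for $N^\eps$ obtained via conformal squares), while the continuous frame $(\Psi_1,\Psi_2,N)$ satisfies \eqref{eq:compat} with $U,V$ as in \eqref{eq:matUV}; using the asymptotic analyticity of the coefficients in \eqref{eq:recona}--\eqref{eq:reconb} and the already-established $O(\eps)$ bounds, a discrete Gronwall argument along the zig-zag grid propagates the $O(\eps)$ initial frame error from \eqref{eq:howtostart2} into a uniform $O(\eps)$ frame error on all of $\ndom{xy}h$. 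Third, the position $F^\eps$ is recovered by one further summation of $\delta_xF^\eps=\exp(\cfy^\eps)\fy^\eps$ (or equivalently $\delta_yF^\eps=\exp(\cfx^\eps)\fx^\eps$); comparison with integration of the continuous identity $F_x=e^\cf\Psi_1$, using the frame and conformal-factor estimates together with $F^\eps(0,0)=f(0)+O(\eps)$ from \eqref{eq:howtostart2}, produces the first inequality in \eqref{eq:theclaim}. The tangent-vector bounds then follow immediately from the frame and conformal-factor estimates via \eqref{eq:reconf} and its continuous counterpart.

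The main obstacle is precisely this last reconstruction step. The hard analytic work --- propagation of errors through the coupled Gauss-Codazzi system in the complex-analytic norm --- has already been compressed into Proposition \ref{prp:main}, but translating its conclusion into uniform geometric bounds requires careful bookkeeping: the quantities $\fx,\fy,N,\cfx,\cfy$ live on four different natural subdomains $\ndom{x}h,\ndom{y}h,\ndom{xy}h,\ldots$; the asymptotically analytic coefficients in the discrete frame equations are themselves $\eps$-dependent; and the zig-zag Gronwall recursion involves $O(h/\eps)$ steps with per-step error $O(\eps^2)$, which must be kept uniform in $\eps$. None of these points is genuinely difficult, but together they form the bulk of the remaining argument.
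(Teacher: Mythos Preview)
Your approach mirrors the paper's: verify the hypotheses of Proposition~\ref{prp:main} on the initial strip via \eqref{eq:howtostart} and Lemma~\ref{lem:confsq}, apply the proposition to obtain $O(\eps)$-closeness of the Gauss--Codazzi data, and then run a discrete Gronwall argument through the reconstruction equations \eqref{eq:reconu}--\eqref{eq:reconb} (compared against the continuous frame equations \eqref{eq:compat}\&\eqref{eq:matUV}) to lift this to the conformal factors, the tangent frame, and finally $F^\eps$ itself. One small ordering issue worth noting: the conclusion $h_\eps\ge h$ is not delivered directly by Proposition~\ref{prp:main} as you suggest, but is established \emph{a posteriori} in the paper --- once the discrete tangent vectors $\delta_xF^\eps,\delta_yF^\eps$ are shown to be uniformly $O(\eps)$-close to the orthogonal, non-vanishing continuous ones, degeneracy of $F^\eps$ at any height below $h$ is ruled out.
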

Theorem \ref{thm:dsc} above gives an answer to the question of 
how to approximate the unique isothermic surface $F$ that is determined by given Bj\"orling data
by a family of $\eps$-isothermic surfaces $F^\eps$.
Our construction of the $F^\eps$ is completely explicit,
and it requires no a priori knowledge about $F$.
The figures here and below illustrate that our construction can be used
to generate pictures of the $F^\eps$ with just a few lines of code.
\begin{figure}[htb]
\includegraphics[width=0.3\textwidth]{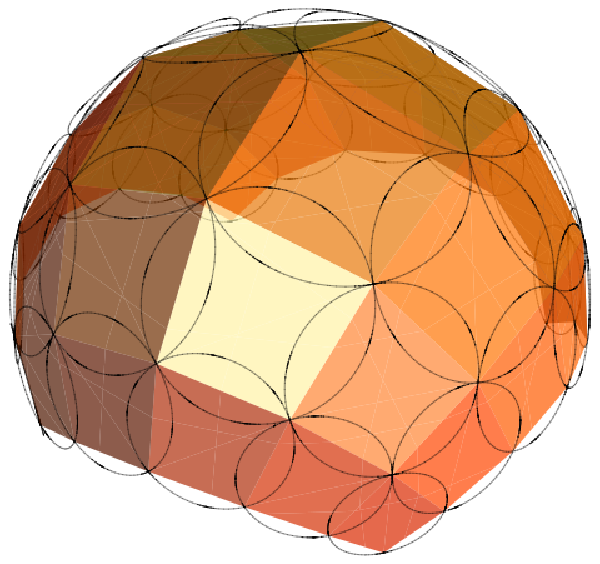}
\includegraphics[width=0.3\textwidth]{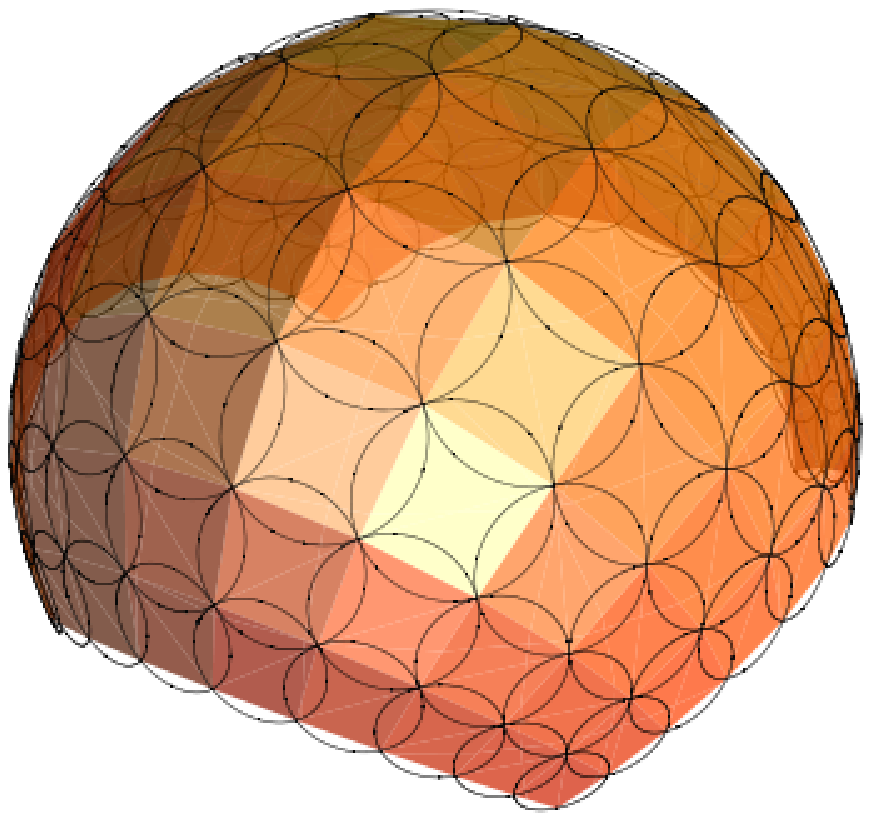}
\includegraphics[width=0.3\textwidth]{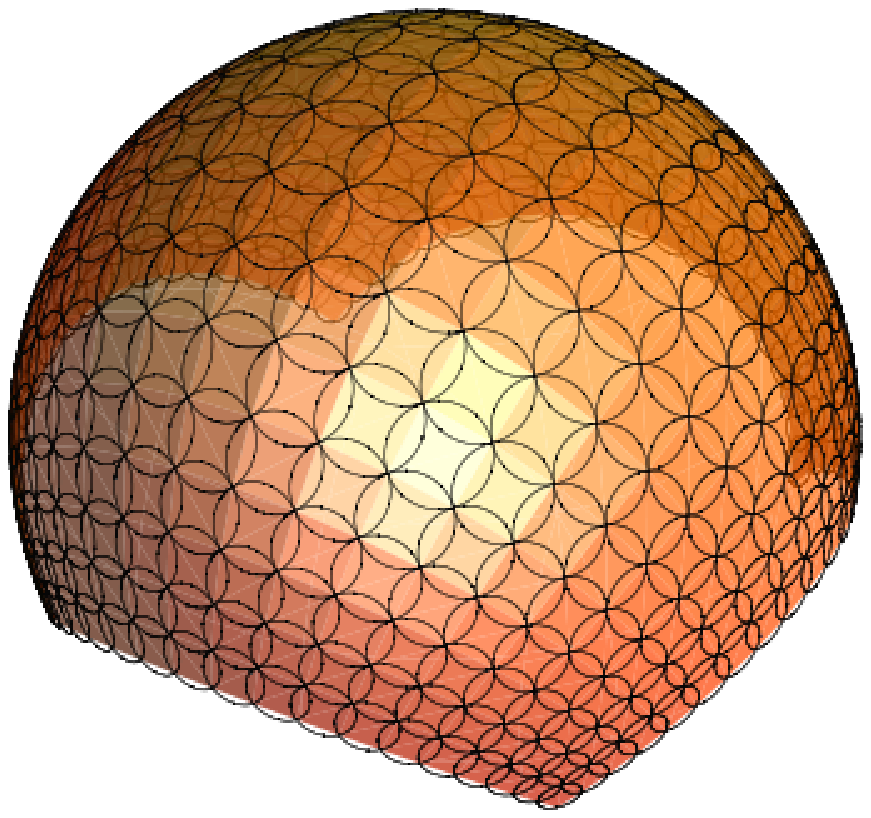}
\caption{A part of a sphere in different degrees of discretization.}
\end{figure}
\begin{rmk}
 Note that if the discrete isothermic surfaces $F^\eps$ converge to a smooth
 isothermic surface $F$, then also the discrete Christoffel and Darboux
 transforms of $F^\eps$ converge to the corresponding smooth Christoffel and
 Darboux transforms of $F$. This will be proven in Section~\ref{sec:trafos}.
\end{rmk}
\begin{proof}[Proof of Theorem \ref{thm:dsc}]
  Since $F$ is real-analytic on $\dom r{\bar h}$, 
  the derived quantities $u,v,w$ and $\cvx,\cvy$ are real-analytic there as well,
  and can be extended to functions in $\xanl{\bar h}$, for a suitable choice of 
the ``fattening parameter'' $\brho>0$,
  after diminishing $\bar h>0$ if necessary.
  The extensions satisfy the Gauss-Codazzi 
system~\eqref{eq:gc1}--\eqref{eq:gc3} on the complexified domain.
  
  Next, consider the $\eps$-discrete isothermic surfaces $F^\eps:\dom r{h_\eps}\to\setR$ 
  that are grown from the Bj\"orling data $(f,n)$.
  Define the associated quantities $v^\eps,w^\eps,\cvx^\eps,\cvy^\eps$.
  Thanks to~\eqref{eq:howtostart}, these are real analytic functions on 
$\ndom{xy}\eps$,
  and they extend complex-analytically w.r.t.\ $\xi$ to $\cdom{xy}\eps$.
  Since the quadruple $(v^\eps,w^\eps,\cvx^\eps,\cvy^\eps)$ satisfies the 
discrete Gauss-Codazzi equations~\eqref{eq:gd1}--\eqref{eq:gd3},
  the $\xi$-analyticity is propagated from the initial strip to the maximal domain of existence,
  i.e., $v^\eps,w^\eps\in\anl{xy}{h_\eps}$ etc.

  Moreover, again thanks to~\eqref{eq:howtostart}, 
  the differences $\Delta v^\eps$, $\Delta\cvx^\eps$ and $\Delta\cvy^\eps$ are of order $\bo(\eps)$
  on the initial strip:
  \[ 
  \nrm{xy}{\eps}{\Delta v^\eps}\le A\eps ,\quad
  \nrm{xxy}{\eps}{\Delta\cvx^\eps}\le A\eps, \quad
  \nrm{xyy}{\eps}{\Delta\cvy^\eps}\le A\eps,
  \]
  with a suitable $\eps$-independent constant $A$.
  For the remaining differences $\Delta w^\eps$, 
  it follows via Lemma~\ref{lem:confsq} on the equivalence of 
$(v^\eps,w^\eps)$ and $(\cfa^\eps,\cfb^\eps)$
  that they satisfy the same estimate (enlarging $A$ if necessary):
  \[ \nrm{xy}{\eps}{\Delta w^\eps}\le A\eps. \]
  We are thus in the situation to apply Proposition~\ref{prp:main}.
  From the estimate~\eqref{eq:B}, it follows in particular that 
  \begin{align}
    \label{eq:BB}
    v^\eps = v+\bo(\eps), \quad w^\eps = w+\bo(\eps), \quad \cvx^\eps 
= \cvx+\bo(\eps), \quad \cvy^\eps = \cvy+\bo(\eps).
  \end{align}
  Here and below, we use the slightly ambiguous notation $\bo(\eps)$ to express 
that the discrete quantities approximate the associated continuous ones 
  uniformly on their respective (real) domains $\ndom{xy}{h_\eps}$ or 
$\ndom{xxy}{h_\eps}$, $\ndom{xyy}{h_\eps}$, with a maximal error of order 
$\eps$.

  Next, we conclude from~\eqref{eq:BB} that also
  \begin{align}
    \label{eq:BBB}
    a^\eps = a + \bo(\eps), \quad b^\eps = b+\bo(\eps), \quad \cfx^\eps 
= \cfx+\bo(\eps), \quad \cfy^\eps = \cfy+\bo(\eps).
  \end{align}
  Indeed, it follows directly from the definition of these quantities that \eqref{eq:BBB} implies
  \begin{align*}
    \delta_xF^\eps = \exp(\cfx^\eps)a^\eps 
= \exp\big(\cfx+\bo(\eps)\big)\big(a+\bo(\eps)\big) = \exp(\cfx)a+\bo(\eps) = 
\partial_xF+\bo(\eps),
  \end{align*}
  and, likewise, $\delta_yF^\eps=\partial_yF+\bo(\eps)$, which, eventually, 
  implies further that also $F^\eps=F+\bo(\eps)$, 
  thanks to $F^\eps(\xi,\eta)= F(0)$ for $-\epsss<\xi<\epsss$ and 
$-\epss<\eta\le\epss$ by construction.
  Therefore, our claim~\eqref{eq:theclaim} is a consequence of~\eqref{eq:BBB}.

  We only sketch the proof of~\eqref{eq:BBB}, 
  that is little more than a repeated application of the Gronwall lemma.
  First of all, the claim~\eqref{eq:BBB} holds on the strip $\ndom{xy}{\eps}$ 
thanks to~\eqref{eq:howtostart2}.
  Now compare the frame equations~\eqref{eq:compat}\&\eqref{eq:matUV} 
  with their discrete analogs from~\eqref{eq:reconu}--\eqref{eq:reconb}:
  \begin{align*}
    &\partial_xu = w+v \quad\text{and} \quad \delta_x\cfy^\eps = w^\eps + v^\eps,\\
    &\partial_yu = w-v \quad\text{and} \quad \delta_y\cfx^\eps = w^\eps - v^\eps,\\
    &\partial_xb = (w-v) (\atp_ya+\bo(\eps)) \quad\text{and} \quad 
      \delta_xb^\eps = (w^\eps-v^\eps+\bo(\eps))\atp_ya^\eps+\bo(\eps)\atp_xb^\eps, \\
    &\partial_ya = (w-v) (\atp_xb+\bo(\eps)) \quad\text{and} \quad 
      \delta_ya^\eps = (w^\eps-v^\eps+\bo(\eps))\atp_xb^\eps+\bo(\eps)\atp_ya^\eps.
  \end{align*}
  Subtract the respective equations, recall~\eqref{eq:BB}, and use a standard 
Gronwall argument
  to conclude that the validity of~\eqref{eq:BBB} extends from the 
``initial strip'' to the entire domain $\ndom{xxyy}{h}$.

  A posteriori, we conclude that $h_\eps\uparrow h$ as $\eps\downarrow0$.
  where $h>0$ is the constant obtained in the proof of 
Proposition~\ref{prp:main}. 
  Indeed, thanks to the uniform closeness of the discrete tangent vectors 
$\delta_xF^\eps$, $\delta_yF^\eps$
  to their continuous counterparts $\partial_xF$, $\partial_yF$ 
  --- which are orthogonal with non-vanishing length ---
  it easily follows that there cannot occur any degeneracies in $F^\eps$ at any $h^\eps<h$.
\end{proof}
\begin{figure}[htb]
  \label{fig:examples}
  \centering
  \includegraphics[width=0.45\linewidth]{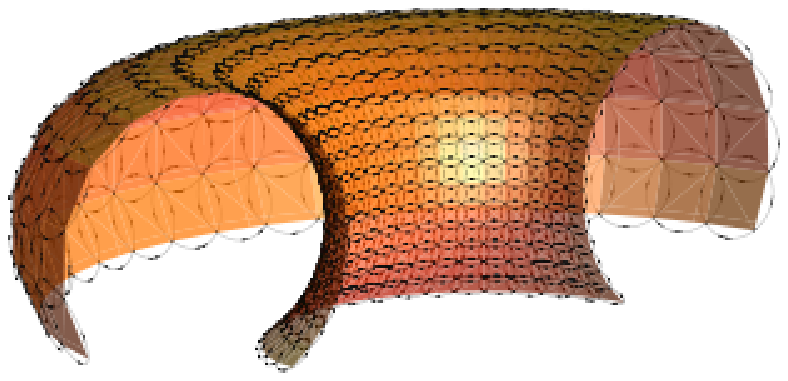}
  \includegraphics[width=0.45\linewidth]{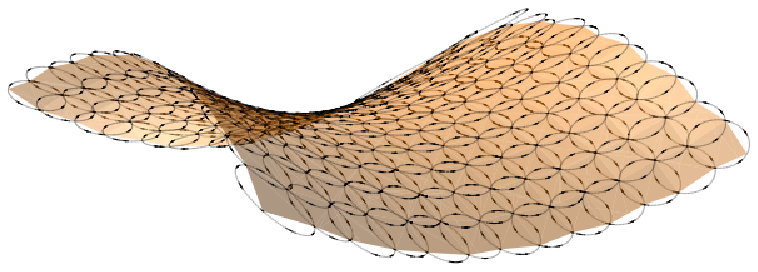}
  \caption{Examples of discrete isothermic surfaces:
    torus ({\it left}), hyperbolic paraboloid ({\it right})}
\end{figure}

\section{Transformations}\label{sec:trafos}

Isothermic surfaces have an exceptionally rich transformation theory related.
For the definition of discrete isothermic surfaces used in this paper this
transformation theory carries over to the discrete setup.

We consider two important transformations, namely the
Christoffel transformation and Darboux transformation. Their
analogs for discrete isothermic surfaces may for example be found
in~\cite{bp,discDarboux,udotrafo,udobuch,buch2}. 
It is a natural question whether
the convergence results of Theorem~\ref{thm:dsc} 
can be generalized to imply the convergence of the transformed surfaces.
%
%

\subsection{Christoffel transformation}
We briefly remind the classical definition of the Christoffel transformation.
The included existence claim was first proved by Christoffel~\cite{C}.
\begin{dfn}
  Let $F:\dom rh\to \reals^3$ be an isothermic surface. Then the
  $\reals^3$-valued one-form $dF^\st$ defined by 
  \[  F_x^\st=\frac{F_x}{\|F_x\|^2}, \qquad F_y^\st=-\frac{F_y}{\|F_y\|^2}, \]
  is closed. The surface $F^\st:\dom rh\to \reals^3$, defined (up to
  translation) 
  by integration of this one-form, is isothermic and is called {\em dual} to
  the surface $F$ or {\em Christoffel transform} of the surface $F$.
\end{dfn}

Given any isothermic surface $F$ and its dual $F^\st$, straightforward
calculation leads to the following
relations between corresponding quantities.
\begin{align*}
F_x^\st &= \text{e}^{-2\cf}F_x, & F_y^\st &= -\text{e}^{-2\cf}F_y, & N^\st
&=-N, \\
\cf^\st&=-\cf, & v^\st &=-v, &  w^\st &=-w, & \cvx^\st &= -\cvx, & \cvy^\st &= \cvy.
\end{align*} 

The discrete case is nearly the same, see for example~\cite{bp}.
\begin{dfn}
  Let $F^\eps:\dom rh\to \reals^3$ be a discrete isothermic
  surface. Then the  $\reals^3$-valued discrete one-form $\delta(F^\st)^\eps$
  defined by  
  \[  \delta_x (F^\st)^\eps=\frac{\delta_x F^\eps}{\|\delta_x F^\eps\|^2},
  \qquad  \delta_y (F^\st)^\eps= -\frac{\delta_y F^\eps}{\|\delta_y
    F^\eps\|^2},  \] 
  is closed. The surface $(F^\st)^\eps:\Omega^\eps(r)\to \reals^3$, defined (up
  to  translation) 
  by integration of this discrete one-form, is a discrete isothermic surface
  and is called {\em dual} to 
  $F^\eps$ or {\em Christoffel transform} of $F^\eps$.
\end{dfn}

\begin{cor}\label{cor:Christrafo}
Under the assumptions of Theorem~\ref{thm:dsc} 
not only the discrete isothermic surface itself converges to the
corresponding smooth isothermic surface,  but also the discrete 
Christoffel transforms converge to the corresponding
Christoffel transforms of the smooth isothermic surface. 
\end{cor}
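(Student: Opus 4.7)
The plan is to reduce the claim to the $C^1$-approximation already established in Theorem~\ref{thm:dsc}, exploiting the fact that the Christoffel map acts pointwise on the tangent vectors via a smooth nonlinearity that is Lipschitz away from zero. First I would fix the translational gauge freedom by imposing $(F^\st)^\eps(0)=F^\st(0)$. This eliminates the ambiguity inherent in the Christoffel transform and allows pointwise comparison.

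Next, from Theorem~\ref{thm:dsc} we have the uniform bounds $\|\delta_x F^\eps-F_x\|\le C\eps$ and $\|\delta_y F^\eps-F_y\|\le C\eps$ on $\ndom{xy}{h}$. Since $F$ is an analytic isothermic surface on a compact domain, the conformal factor $\cf$ is uniformly bounded, so $\|F_x\|=\|F_y\|=e^{\cf}$ is bounded below by some $m>0$. Consequently, for all sufficiently small $\eps$, the discrete tangent vectors $\delta_x F^\eps,\delta_y F^\eps$ remain in a compact region of $\setR^3\setminus\{0\}$ on which the map $v\mapsto v/\|v\|^2$ is Lipschitz. Combining, I obtain
\[ \bigl\|\delta_x(F^\st)^\eps-F^\st_x\bigr\|\le C'\eps,\qquad \bigl\|\delta_y(F^\st)^\eps-F^\st_y\bigr\|\le C'\eps, \]
uniformly on $\ndom{xy}{h}$. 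This already yields the tangent-field part of the desired convergence.

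For the convergence of $(F^\st)^\eps$ itself, I would argue by discrete integration along a lattice path from the origin. Writing $(F^\st)^\eps(\xi,\eta)-(F^\st)^\eps(0)$ as a telescoping sum over $\eps$-steps in the $x$- and $y$-directions and $F^\st(\xi,\eta)-F^\st(0)$ as the corresponding line integral of the one-form $dF^\st$, the difference splits into two contributions: (i) the pointwise Lipschitz error of the preceding paragraph, which contributes $O(\eps)$ after summing $O(1/\eps)$ many terms each scaled by $\eps$; and (ii) a Riemann-sum-versus-integral error for the smooth function $F^\st_x$ (respectively $F^\st_y$) along the path, which is likewise $O(\eps)$ by the standard endpoint estimate applied to the analytic integrand. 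The two contributions combine to give $\|(F^\st)^\eps(\xi,\eta)-F^\st(\xi,\eta)\|\le C''\eps$ uniformly on $\ndom{xy}{h}$, and together with the tangent estimates this is the analog of~\eqref{eq:theclaim} for the Christoffel transforms.

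The only potentially delicate step is (ii), since one must verify that the nodes of the discrete lattice path approximate a smooth curve with the right sampling so that the Riemann-sum error is truly $O(\eps)$, rather than $O(1)$ due to an accumulation of endpoint errors; but because the integrand $F^\st_x$ is real-analytic and the path has length bounded by $\bar h+r$, this is routine. No obstruction arises from the Christoffel transform being defined only up to translation thanks to the gauge choice above. The statement for the Darboux transformation can then be handled by the same scheme, with the extra care that its defining system (a Riccati-type difference equation with a spectral parameter) must also be shown to be stable under the $O(\eps)$ perturbation of its coefficients, again reducible to a Gronwall argument along lattice paths.
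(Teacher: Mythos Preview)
Your argument is correct, but it follows a different route from the paper's own proof.

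The paper observes that on the level of the derived quantities, the Christoffel transform acts by simple sign flips: for the discrete surface one has $a^\st=a$, $b^\st=-b$, $\cfx^\st=-\cfx$, $\cfy^\st=-\cfy$, $N^\st=-N$, $\cfa^\st=-\cfa$, $\cfb^\st=-\cfb$, $v^\st=-v$, $w^\st=-w$, $\cvx^\st=-\cvx$, $\cvy^\st=\cvy$, and the same relations hold in the smooth case. Hence the convergence of $(v^\eps,w^\eps,\cvx^\eps,\cvy^\eps)$ and of the frame quantities established in the proof of Theorem~\ref{thm:dsc} transfers verbatim to the dual quantities, and the reconstruction step at the end of that proof then yields convergence of $(F^\st)^\eps$ to $F^\st$.

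Your approach is more elementary and stays entirely on the level of the surface maps: you use only the output~\eqref{eq:theclaim} of Theorem~\ref{thm:dsc}, the Lipschitz continuity of $p\mapsto p/\|p\|^2$ away from the origin, and a telescoping-sum-versus-integral argument. This is perfectly valid and arguably cleaner for the Christoffel transform specifically, since the defining formulas are pointwise in the tangent vectors. The paper's route, by contrast, buys you the convergence of \emph{all} the dual derived quantities (conformal factor, curvatures, frame) for free, and it makes manifest that the dual surfaces fit back into the same analytic framework --- which is the natural setup for iterating transformations. Your final remark about the Darboux transform is not needed here (it is a separate corollary), and indeed that case does require a Gronwall-type stability argument rather than a pointwise Lipschitz bound, as you note.
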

\begin{proof}
Recall the definitions of the discrete quantities in Section~\ref{discQuant}.
We immediately deduce the following relations for a discrete
isothermic surface $F^\eps$ and its dual $(F^\st)^\eps$. For better reading we 
omit the superscript $\eps$.
\begin{align*}
a^\st &= a, & b^\st &= -b, & \cfx^\st&=-\cfx, & \cfy^\st&=-\cfy, & N^\st &=-N, \\
\cfa^\st&=-\cfa, & \cfb^\st&=-\cfb, &v^\st &=-v, &  w^\st &=-w, &
\cvx^\st &= -\cvx, & \cvy^\st &= \cvy. 
\end{align*}

Now the proof follows
directly from the proofs of Theorem~\ref{thm:dsc} 
\end{proof}

\subsection{Darboux transformation}\label{Darboux}
The Darboux transformation for isothermic surfaces was introduced by
Darboux~\cite{D}. It is a special case of a Ribaucour transformation and is
closely
connected to M\"obius geometry as well as to the integrable system approach
to isothermic surfaces, see for example~\cite{udobuch}.

\begin{dfn}
  Let $F:\dom rh\to \reals^3$ be an isothermic surface. Then the isothermic
  surface $F^+:\dom rh\to \reals^3$ is called a {\em Darboux transform} of
  $F$ if
  \begin{align*}
  F_x^+&=-\frac{\|F^+-F\|^2}{C\|F_x\|^2} \left(F_x -2\langle F_x,
  \frac{F^+-F}{\|F^+-F\|}\rangle \frac{F^+-F}{\|F^+-F\|} \right), \\
  F_y^+&=\frac{\|F^+-F\|^2}{C\|F_y\|^2} \left(F_y -2\langle F_y,
  \frac{F^+-F}{\|F^+-F\|}\rangle \frac{F^+-F}{\|F^+-F\|}\right), 
\end{align*}
  where $C\in\reals$, $C\neq 0$, is a constant which is called {\em parameter
    of the Darboux transformation}.
\end{dfn}
In the discrete case the definition of the Darboux transformation may be
interpreted as ``discrete Ribaucour transformation`` using intersecting instead
of touching spheres.
In particular, recall the definition of the crossratio $q(p_1,p_2,p_3,p_4)$ of 
four coplanar points $p_1,\dots,p_4$. After identification of their common 
plane with the complex plane $\cplx$ the crossratio may be calculated by the 
formula 
\begin{align*}\label{eq:crossratio}
  q(p_1,p_2,p_3,p_4):=(p_1-p_2)(p_2-p_3)^{-1}(p_3-p_4)(p_4-p_1)^{-1}
\end{align*}
The following definition 
first appeared in~\cite{discDarboux}, see also~\cite{udobuch,buch2}.

\begin{dfn}\label{def:Darboux}
  Let $F^\eps:\dom rh\to \reals^3$ be a discrete isothermic
  surface. Then the  discrete isothermic surface
  $(F^+)^\eps:\dom rh\to \reals^3$ is called a {\em discrete Darboux
    transform} of $F^\eps$ if the following conditions are satisfied.
  \begin{enumerate}[(i)]
\item The four points $\atp_x F^\eps, \tp_x F^\eps, \atp_x (F^+)^\eps,\tp_x
(F^+)^\eps$ lie in a common plane and the same is true for  $\atp_y F^\eps,
\tp_y F^\eps, \atp_y (F^+)^\eps,\tp_y (F^+)^\eps$.
\item $\displaystyle q(\atp_x F^\eps, \tp_x F^\eps, \tp_x (F^+)^\eps, \atp_x
(F^+)^\eps)=\frac{1}{\gamma}$ and \newline
$\displaystyle q(\atp_y F^\eps,\tp_y
F^\eps,\tp_y (F^+)^\eps, \atp_y(F^+)^\eps)=-\frac{1}{\gamma},$
where $\gamma\in\reals$, $\gamma\neq 0$ is a constant which is called {\em
parameter of the Darboux transformation}.
  \end{enumerate}
\end{dfn}
 
Note that given any discrete isothermic surface, a discrete Darboux transform
may be obtained 
by prescribing the value of $(F^+)^\eps$ at one point and using the
conditions of the definition to successively build a new surface which is
also discrete isothermic (as long as the surface does not degenerate). 

In order to obtain convergence of the discrete
Darboux transform to the corresponding continuous one, we choose
$\gamma=C/\eps^2$.

\begin{cor}\label{cor:Darbouxtrafo}
Under the assumptions of Theorem~\ref{thm:dsc}
not only the discrete isothermic surface itself converges to the
corresponding smooth isothermic surface,  but also the discrete Darboux
transforms (with $\gamma=C/\eps^2$) converge to the corresponding Darboux
transforms (with parameter $C$) of the smooth isothermic surface. 
\end{cor}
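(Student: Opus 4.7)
The plan is to imitate the strategy used for Corollary~\ref{cor:Christrafo}, but with an additional layer of analysis: whereas the Christoffel transform is defined by local algebraic operations on the derived quantities, the Darboux transform requires propagation along the lattice, so one needs a consistency-plus-stability argument for the cross-ratio iteration. Since Theorem~\ref{thm:dsc} already provides the uniform $\bo(\eps)$-approximations~\eqref{eq:BB} and~\eqref{eq:BBB} of $F^\eps$, its tangent frames $a^\eps,b^\eps$ and its conformal factors, one may treat the quadruple $(F^\eps,a^\eps,b^\eps,N)$ as ``known data'' and focus on the evolution of $(F^+)^\eps$ around it.

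First, I would fix a common base-point $p_0\in\dom rh$ at which $(F^+)^\eps(p_0)=F^+(p_0)$, and then derive from the planarity condition~(i) of Definition~\ref{def:Darboux}, together with the two cross-ratio identities~(ii) specialized to $\gamma=C/\eps^2$, explicit update formulae for $\tp_x(F^+)^\eps$ and $\tp_y(F^+)^\eps$ in terms of the already-known $F^\eps$ and the current $(F^+)^\eps$. Concretely, for each horizontal elementary edge the four points $\atp_x F^\eps,\tp_x F^\eps,\tp_x(F^+)^\eps,\atp_x(F^+)^\eps$ span a plane, and after identifying this plane with $\setC$ the cross-ratio condition becomes a single scalar M\"obius equation that is uniquely solvable for $\tp_x(F^+)^\eps$ as long as $\|(F^+)^\eps-F^\eps\|$ is bounded away from zero; the latter is preserved over any bounded region by the same compactness argument that rules out degeneracies at the end of the proof of Theorem~\ref{thm:dsc}.

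Second, I would carry out a Taylor expansion in $\eps$ of the two update rules, introducing the separation vector $D^\eps := (F^+)^\eps-F^\eps$ and exploiting $\delta_x F^\eps = F_x+\bo(\eps)$. A direct computation, carried out in an orthonormal frame of the common plane adapted to $D^\eps$, should show that the cross-ratio condition with $\gamma=C/\eps^2$ is equivalent to the discrete evolution
\begin{align*}
\delta_x(F^+)^\eps &= -\frac{\|D^\eps\|^2}{C\,\|\delta_x F^\eps\|^2}\left(\delta_x F^\eps-2\Big\langle \delta_x F^\eps,\frac{D^\eps}{\|D^\eps\|}\Big\rangle\frac{D^\eps}{\|D^\eps\|}\right)+\eps R^\eps_x,
\end{align*}
together with the analogous identity for $\delta_y(F^+)^\eps$ with the opposite sign (matching the smooth formula), where the residuals $R^\eps_x,R^\eps_y$ depend smoothly on $F^\eps,D^\eps$ and are uniformly bounded. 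Comparing with the smooth Darboux equations, the leading-order right-hand sides agree up to $\bo(\eps)$ provided that $D^\eps$ stays uniformly close to $F^+-F$. The precise accounting in this step --- in particular, verifying that the $\eps^{-2}$ from $\gamma=C/\eps^2$ is exactly compensated by the quadratic smallness of the cross-ratio under the scaling $\delta_x F^\eps=\bo(1)$, and that no spurious $\bo(1)$ terms survive --- is the part I expect to be the main technical obstacle.

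Third, with consistency in hand, I would apply a standard discrete Gronwall argument to the error $\Delta D^\eps := D^\eps -(F^+-F)$. The right-hand side of the discrete evolution is locally Lipschitz in $D^\eps$ (the reflection map depends smoothly on its arguments as long as $\|D^\eps\|$ is bounded below and $\|\delta_x F^\eps\|$ is bounded both above and below, both of which follow from the $C^1$-closeness of $F^\eps$ to the analytic $F$ on a compact subdomain). Hence the Gronwall lemma turns the $\bo(\eps)$ consistency error and the vanishing initial error at $p_0$ into a uniform bound $\|\Delta D^\eps\|=\bo(\eps)$ on a possibly slightly smaller domain, which together with $F^\eps=F+\bo(\eps)$ yields $(F^+)^\eps=F^++\bo(\eps)$, as claimed.
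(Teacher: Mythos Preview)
Your proposal is correct and follows essentially the same route as the paper: fix a common base point, study the separation vector $D^\eps=(F^+)^\eps-F^\eps$, expand the cross-ratio update with $\gamma=C/\eps^2$ to obtain the smooth Darboux right-hand side plus an $\bo(\eps)$ residual, and propagate the error by a Gronwall-type summation. The paper carries out your ``main technical obstacle'' by the elementary M\"obius identity $p_3-p_2=\big((p_4-p_1)-(p_2-p_1)\big)\big/\big(1-q\,(p_4-p_1)/(p_2-p_1)\big)$ in the common plane, which makes the $\eps$-expansion of the update immediate.
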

\begin{proof}
Assume that the discrete isothermic surface itself converges to the
corresponding smooth isothermic surface with errors of order $O(\eps)$ as in
the proofs of Theorem~\ref{thm:dsc}.
Now start with $(F^+)^\eps(0,0)=F^+(0,0)$ and build
the discrete Darboux transform successively using the above definition. Denote
the distance between corresponding points by $d^\eps=(F^+)^\eps-F^\eps$.

In order to relate corresponding discrete and smooth quantities, we first
use the simple equivalence
\begin{equation}\label{eq:moeb}
\frac{(p_2-p_1)(p_4-p_3)}{(p_3-p_2)(p_1-p_4)}=\frac{1}{q} \quad\iff\quad
 p_3-p_2=\frac{(p_4-p_1)-(p_2-p_1)}{1-q\frac{(p_4-p_1)}{(p_2-p_1)}}.
\end{equation}
Then we use the fact that in our case $(p_2-p_1)=O(\eps)$ and $q= C/\eps^2$. 
Here and below we use the notation $O(\eps)$ with the same meaning as in the proof
of Theorem~\ref{thm:dsc}. Thus we obtain
\[p_3-p_2=\frac{(p_4-p_1)-\eps\frac{(p_2-p_1)}{\eps}}{1-\frac{\eps^2(p_4-p_1)}{
C(p_2-p_1) } }=  (p_4-p_1) + \eps
\left(\frac{(p_4-p_1)^2}{C\frac{(p_2-p_1)}{\eps}}-\frac{(p_2-p_1)}{\eps}\right)
+O(\eps^2) .\]
Now we identify 
\[(p_3-p_2)=\tp_x d^\eps,\quad (p_4-p_1)=\atp_xd^\eps,\quad
\frac{(p_2-p_1)}{\eps} =\delta_x F^\eps \]
 and easily deduce by straightforward identifications of complex numbers and
vectors that
\begin{align*}
 \frac{\tp_x d^\eps -\atp_xd^\eps}{\eps} &= \frac{1}{C\|\delta_x
F^\eps\|^2}(-\|\atp_xd^\eps\|^2 \delta_x F^\eps + 2\atp_xd^\eps \langle
\atp_xd^\eps, \delta_x F^\eps\rangle) -\delta_x F^\eps  +O(\eps) \\
&= F_x^+- F_x +O(\eps).
\end{align*}
Analogously, we obtain
\begin{align*}
 \frac{\tp_y d^\eps -\atp_yd^\eps}{\eps} &= \frac{1}{C\|\delta_y
F^\eps\|^2}(\|\atp_yd^\eps\|^2
\delta_y F^\eps - 2\atp_yd^\eps \langle \atp_yd^\eps, \delta_y F^\eps\rangle)
-\delta_y F^\eps +O(\eps)
\\
&= F_y^+- F_y +O(\eps).
\end{align*}

Thus starting with $(F^+)^\eps(0,0)=F^+(0,0)$ and building
the discrete Darboux transform successively using the above definition, in each
step we add an error of order $O(\eps^2)$ to the difference
$d=F^+-F$ of the Darboux pair. Therefore we obtain
$(F^+)^\eps=F^+ + O(\eps)$ as claimed.
\end{proof}

\begin{rmk}
 Using the definitions of continuous and discrete Darboux transformations,
corresponding formulas for $a^+$, $b^+$, $\cfx^+$, $\cfy^+$, $N^+$, $\cfa^+$,
$\cfb^+$, $v^+$, $w^+$, $\cvx^+$, $\cvy^+$ may be deduced, which also
converge under the assumptions of Theorem~\ref{thm:dsc}.
\end{rmk}

\section*{Acknowledgements}
The authors would like to thank Sepp Dorfmeister and Fran Burstall for 
stimulating and helpful discussions.

\bibliographystyle{spmpsci}      
\bibliography{isowarm}   

\end{document}